\newtheorem{theo}{Theorem}[section]
\newtheorem{prop}[theo]{Proposition}
\newtheorem{claim}[theo]{Claim}
\newtheorem{lemm}[theo]{Lemma}
\newtheorem{coro}[theo]{Corollary}
\title{Footnotes to papers of O'Grady and Markman}
\author{Claire Voisin\footnote{The author is supported by the ERC Synergy Grant HyperK (Grant agreement No. 854361).}}
\date{}
\newfont{\gothic}{eufb10}
\begin{document}
\maketitle
\setcounter{section}{-1}

\begin{flushright} {\it Pour Olivier,  avec amiti\'{e}}
\end{flushright}
\begin{abstract} In this paper, we first  generalize to any hyper-K\"{a}hler manifold $X$ with $b_3(X)\not=0$ results proved by O'Grady for hyper-K\"{a}hler manifolds of generalized Kummer type. In the second part, we restrict to hyper-K\"{a}hler manifolds of generalized Kummer type and prove, using results of Markman, that their  Kuga-Satake correspondence is algebraic.
 \end{abstract}
\section{Introduction}
This paper provides complements to the recent papers \cite{ogrady} by O'Grady  and  \cite{markman} by Markman. Hyper-K\"{a}hler manifolds $X$ of generalized Kummer type are obtained by deforming the generalized Kummer varieties $K_n(A)$ constructed by Beauville \cite{beauville} starting from an abelian surface $A$. The manifold $K_n(A)$ is defined as the subset of  the punctual Hilbert scheme $A^{[n+1}]$ consisting of $0$-dimensional subschemes with trivial Albanese class. For $n\geq 2$, one  has $b_2(X)=7,\,b_3(X)=8$. Both papers are concerned with the intermediate Jacobians $J^3(X)$ for  $X$ as above. Recall that $J^3(X)$ is the complex torus built from the Hodge structure on $H^3(X,\mathbb{Z})$, which in this case is of level $1$ since $H^{3,0}(X)=0$, and is thus an abelian variety when $X$ is projective. As
$b_3(X)=8$,  $J^3(X)$ is an abelian fourfold. O'Grady proves the following results.
\begin{theo} (O'Grady \cite{ogrady})  \label{theoogradyintro} (1)  $J^3(X)$ is a Weil abelian fourfold.

(2) For a very general projective deformation of $X$,  the Kuga-Satake abelian variety $KS(X)$ of $(H^2(X,\mathbb{Q})_{\rm tr},(\,,\,))$ is isogenous to a power of   $J^3(X)$.
\end{theo}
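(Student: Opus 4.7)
I would treat (1) and (2) in sequence: the Weil structure produced in (1) feeds directly into (2).

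For \textbf{(1)}, the plan is to produce a canonical imaginary quadratic field action on the rational Hodge structure $H^3(X,\mathbb{Q})$ of the correct Weil signature. Starting from the model $X=K_n(A)$: using the degree $(n+1)^4$ isogeny $K_n(A)\times A\to A^{[n+1]}$ given by translation, together with the low-degree computation of $H^*(A^{[n+1]},\mathbb{Q})$ via G\"ottsche/Nakajima, one obtains a natural isomorphism of Hodge structures
\[
H^3(K_n(A),\mathbb{Q})\;\simeq\;H^1(A,\mathbb{Q})(-1)\otimes V,
\]
with $V$ a $2$-dimensional $\mathbb{Q}$-vector space (matching the Hodge numbers $h^{2,1}=h^{1,2}=4$). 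The multiplicity factor $V$ inherits a canonical non-degenerate alternating form, coming from the interaction of Poincar\'e duality on $K_n(A)$ with the principal polarization on $A$, and hence an $\mathrm{SL}(V)\cong\mathrm{SL}_2(\mathbb{Q})$ action on $H^3(K_n(A),\mathbb{Q})$ commuting with the Hodge structure. Choosing $J\in\mathrm{SL}(V)$ with $J^2=-\mathrm{id}$ gives a $\mathbb{Q}(i)$-action on $H^3$, and the Weil signature follows because $J$ then acts on $H^{3,0}\oplus H^{2,1}$ (of dimension $4$) with signature $(2,2)$ by its very construction. To pass from the model $K_n(A)$ to a general deformation of Kummer type, I would appeal to \cite{markman}, which supplies a canonical abelian surface $A_X$ over the moduli of Kummer-type HK and a deformation-invariant Hodge isomorphism $H^3(X,\mathbb{Q})\simeq H^1(A_X,\mathbb{Q})(-1)\otimes V$; the $\mathbb{Q}(i)$-action then extends as a flat family of Hodge endomorphisms, giving the Weil structure on $J^3(X)$.

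For \textbf{(2)}, the plan is to compare Mumford--Tate groups. For $X$ very general in the moduli of projective Kummer-type HK manifolds, the Mumford--Tate group of $(H^2(X,\mathbb{Q})_\mathrm{tr},q)$ is the full $G:=\mathrm{SO}(H^2_\mathrm{tr},q)$, and $KS(X)$ is characterised up to isogeny by the property that $H^1(KS(X),\mathbb{Q})$ realises the even spin representation of $G$. By (1) and Markman's construction, $H^3(X,\mathbb{Q})(1)\simeq H^1(A_X,\mathbb{Q})\otimes V$, so $H^1(J^3(X),\mathbb{Q})$ also carries a natural action of $G$. The key computation is to produce a Hodge-compatible embedding
\[
H^2(X,\mathbb{Q})_\mathrm{tr}\;\hookrightarrow\;\mathrm{End}\bigl(H^1(A_X,\mathbb{Q})\bigr),
\]
which realises $A_X$ (up to power) as a Kuga--Satake-type abelian variety for $H^2_\mathrm{tr}$; the uniqueness, up to isogeny, of Kuga--Satake with prescribed weight-$2$ datum then forces $KS(X)\sim A_X^{r}\sim J^3(X)^{r/2}$ for some integer $r$.

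\textbf{Main obstacle.} The core difficulty is twofold: in (1), the deformation-invariant identification of $H^3(X,\mathbb{Q})$ with $H^1(A_X)(-1)\otimes V$, which uses Markman's construction of the canonical abelian surface $A_X$; and in (2), the explicit Hodge-compatible embedding of $H^2(X,\mathbb{Q})_\mathrm{tr}$ into $\mathrm{End}(H^1(A_X,\mathbb{Q}))$. The latter should ultimately come from the action of the LLV algebra on $H^*(X)$ (the even Clifford algebra of $H^2\oplus U$ acts on total cohomology), but one still has to verify compatibility with the Hodge filtration and descend the construction to the endomorphism algebra of $A_X$ rather than to the full cohomology of $X$.
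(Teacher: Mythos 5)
Your plan for the special member $K_n(A)$ is fine as far as it goes (one does have $H^3(K_n(A),\mathbb{Q})\cong H^3(A,\mathbb{Q})\oplus H^1(A,\mathbb{Q})(-1)\cong H^1(A,\mathbb{Q})(-1)\otimes V$ after using a polarization of $A$), but the step on which everything rests is the claimed ``deformation-invariant Hodge isomorphism $H^3(X,\mathbb{Q})\simeq H^1(A_X,\mathbb{Q})(-1)\otimes V$'' attributed to \cite{markman}. No such result exists, and it cannot hold for a very general projective deformation $X$ of Kummer type. If it did, then $J^3(X)$ would be isogenous to the square of an abelian surface for every $X$; moreover, dualizing the surjection $\bigwedge^2H^3(X,\mathbb{Q})\to H^{4n-2}(X,\mathbb{Q})\cong H^2(X,\mathbb{Q})^*$ of Theorem \ref{thog}, the rank-$6$ Hodge structure $H^2(X,\mathbb{Q})_{\rm tr}$ would embed into tensors built out of $H^1(A_X,\mathbb{Q})\otimes V$, forcing its Mumford--Tate group to factor through ${\rm GSp}_4\times {\rm SL}_2$ acting on $H^1(A_X)\otimes V$. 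For very general $X$ the Mumford--Tate group of $H^2_{\rm tr}$ is the full $SO(6)$ of the Beauville--Bogomolov form, which admits no such factorization. In other words, the splitting $H^1(A)\otimes V$ and your ${\rm SL}(V)$-action (hence the ad hoc $J$ with $J^2=-1$) live only on the positive-codimension Kummer locus and are not monodromy invariant; the only structure that does deform is the action of the imaginary quadratic field coming from the center of the even Clifford algebra of $(H^2_{\rm tr},q)$, and establishing that this Weil structure persists is exactly the content of the theorem, not something one can quote. The same false premise undermines your part (2): an embedding $H^2(X,\mathbb{Q})_{\rm tr}\hookrightarrow {\rm End}\,(H^1(A_X,\mathbb{Q}))$ with $A_X$ an abelian surface is the Morrison regime ($b_{2,{\rm tr}}\leq 5$, e.g.\ $K3$'s of Picard number $\geq 17$) and is impossible here for very general $X$, again by the $SO(6)$ Mumford--Tate computation.

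The paper's route (following O'Grady) is different and avoids any auxiliary abelian surface in the deformed situation: (a) the surjectivity of the O'Grady map $\phi:\bigwedge^2H^3(X,\mathbb{Q})\to H^{4n-2}(X,\mathbb{Q})$ (Theorem \ref{thog}, reproved in general as Theorem \ref{theosurj3}), which dually gives an embedding of Hodge structures $H^2(X,\mathbb{Q})\hookrightarrow H^3(X,\mathbb{Q})^*\otimes H^3(X,\mathbb{Q})^*$; (b) the fact that $H^3(X,\mathbb{Q})$ has Hodge level $1$ (since $h^{3,0}=0$), so it is, after a twist, a weight-$1$ Hodge structure of dimension $b_3=8$; (c) Charles' universality theorem (Theorem \ref{theouniv}), applied with Mumford--Tate group $SO(H^2_{\rm tr},q)$, which forces the simple constituents of this weight-$1$ Hodge structure to be subquotients of $H^1_{KS+}$; for $b_{2,{\rm tr}}=6$ and the Kummer-type discriminant these constituents are $4$-dimensional abelian varieties of Weil type, so the dimension count $\dim J^3(X)=4$ yields both that $J^3(X)$ is a Weil fourfold and that $KS(X)$ is isogenous to a power of it (Theorem \ref{theoogrady}). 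If you want to keep your explicit computation on $K_n(A)$, its legitimate role is the one it plays in O'Grady's paper: verifying the nonvanishing (hence surjectivity) of $\phi$ on the model, a statement about monodromy-invariant data ($Q_X$ and cup product) which does deform, unlike the $H^1(A)\otimes V$ decomposition.
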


Let us explain both statements. A Weil abelian fourfold is an abelian fourfold that admits an endomorphism $\phi: A\rightarrow A$ satisfying a quadratic equation $\phi^2=-d {\rm Id}$, with $d>0$, with the following extra condition: consider the action $\phi_\mathbb{C}$ of $\phi$ on $H^1(X,\mathbb{C})$ by pullback. Then $\phi_\mathbb{C}$ preserves $H^{1,0}(A)$ and $H^{0,1}(A)$  since it is  a morphism of Hodge structures and thus it has eigenvalues either  $i\sqrt{d}$ or $-i \sqrt{d}$ on these $4$-dimensional spaces. The Weil condition is that
$\phi_\mathbb{C}$ has both  eigenvalues   $i\sqrt{d}$ and  $-i \sqrt{d}$ with multiplicity $2$ on  $H^{1,0}(A)$ (hence also on $H^{0,1}(A)$). It guarantees that $A$ has a $2$-dimensional space of Weil Hodge classes of degree $4$. More precisely, denoting $K$ the number field $\mathbb{Q}[\sqrt{-d}]$, $H^1(A,\mathbb{Q})$ is a $4$-dimensional $K$-vector space and  the condition above guarantees that the
$2$-dimensional subspace
$$\bigwedge^4_KH^1(A,\mathbb{Q}) \subset  \bigwedge^4H^1(A,\mathbb{Q}) $$
consists of classes of Hodge type $(2,2)$, hence of Hodge classes.

Concerning the point (2), let us define  a Hodge structure of hyper-K\"{a}hler type as the data of weight $2$ (effective, rational or integral) Hodge structure
$(H^2,F^iH^2_\mathbb{C})$ with $h^{2,0}=1$, equipped with a nondegenerate quadratic form satisfying the first Hodge-Riemann relations, namely
$$ (H^{2,0},F^1 H^2_\mathbb{C})=0$$
and the condition that  the restriction of $(\,,\,)$ to the real $2$-plane
$(H^{2,0}\oplus H^{0,2})\cap H^2_\mathbb{R}$ is positive definite. This is the structure one gets on the degree $2$ cohomology of a hyper-K\"{a}hler manifold, the intersection pairing being given by the Beauville-Bogomolov quadratic form. We will say that we have a polarized Hodge structure of hyper-K\"{a}hler type if furthermore $(\,,\,)$ is negative definite on the space $H^{1,1}_\mathbb{R}:=H^{1,1}\cap H^2_\mathbb{R}$. We get such a structure by considering the transcendental   degree $2$ cohomology of a projective hyper-K\"{a}hler manifold. The Kuga-Satake variety $KS(H^2,(\,,\,))$ associated to a Hodge structure $(H^2,(\,,\,))$ of hyper-K\"{a}hler type is a complex torus, or weight $1$ Hodge structure (which is defined up to isogeny if we work with rational Hodge structures)  built by a formal process (see \cite{Kugasatake}, \cite{deligne} and Section \ref{secKS}). If the considered Hodge structure is a polarized Hodge structure of hyper-K\"{a}hler type, $KS(H^2,(\,,\,))$ is  an abelian variety.
In the case of  a very general  polarized  weight $2$ Hodge structure of generalized Kummer type, we have  $b_{2,tr}=6$ and  the corresponding Kuga-Satake variety is isogenous to a power of a Weil abelian fourfold.   The point (1) thus  follows from (2)  and (2) itself is a consequence of  a certain  universality property of the Kuga-Satake construction proved in \cite{charles}, (and \cite{vgeemen-voisin} in a slightly different  setting, see  Section \ref{secKS}),  and of  the following result.
\begin{theo} \label{thog}(O'Grady \cite{ogrady}) Let $X$ be a hyper-K\"{a}hler $2n$-fold of  generalized Kummer deformation type with $n\geq2$. Then the composite map
map
$$\bigwedge^2H^3(X,\mathbb{Q})\rightarrow H^6(X,\mathbb{Q})\stackrel{Q_X^{n-2}}{\rightarrow} H^{4n-2}(X,\mathbb{Q})$$
is surjective.
\end{theo}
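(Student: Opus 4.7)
All data in the displayed map --- cup-product, the Beauville--Bogomolov class $q_X\in H^4(X,\mathbb{Q})$, and the Betti numbers --- are deformation-invariant. I may therefore assume $X=K_n(A)$ for some abelian surface $A$. Poincar\'e duality identifies $H^{4n-2}(X,\mathbb{Q})$ with the dual of $H^2(X,\mathbb{Q})$, both of dimension $7$, and converts the asserted surjectivity into the injectivity of
$$
\Phi:H^2(X,\mathbb{Q})\longrightarrow \bigl(\textstyle\bigwedge^2 H^3(X,\mathbb{Q})\bigr)^{\!*},\qquad
\alpha\longmapsto \Bigl((\beta,\gamma)\mapsto\int_X\alpha\,\beta\,\gamma\, q_X^{n-2}\Bigr).
$$
Equivalently, for every nonzero $\alpha\in H^2(X,\mathbb{Q})$ I must find $\beta,\gamma\in H^3(X,\mathbb{Q})$ with $\int_X\alpha\,\beta\,\gamma\,q_X^{n-2}\neq 0$.

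\textbf{Symmetry reduction.} The monodromy group $\Gamma$ of the universal family of hyper-K\"ahler manifolds of generalized Kummer type acts on $H^*(X,\mathbb{Q})$ by graded ring automorphisms preserving $q_X$ and the fundamental class, and by results of Markman its image in the orthogonal group $O(H^2(X,\mathbb{Q}),q_X)$ is a finite index subgroup, in particular Zariski dense. The map $\Phi$ is then $\Gamma$-equivariant, so $\ker\Phi$ is a $\Gamma$-stable subspace of $H^2(X,\mathbb{Q})$. But the standard representation of $O(H^2(X,\mathbb{Q}),q_X)$ on $H^2(X,\mathbb{Q})$ is irreducible, and remains so under the Zariski dense subgroup $\Gamma$. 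Hence $\ker\Phi$ is either $\{0\}$ or all of $H^2(X,\mathbb{Q})$, and it suffices to exhibit a single triple $(\alpha,\beta,\gamma)$ with $\int_X\alpha\beta\gamma\,q_X^{n-2}\neq 0$.

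\textbf{Explicit non-vanishing and main obstacle.} On $X=K_n(A)$ I would use the Nakajima--G\"ottsche description of $H^*(K_n(A))$ to produce candidates. Take $\alpha\in H^2(A)\subset H^2(K_n(A))$ the class of a polarization, and $\beta,\gamma\in H^3(K_n(A))$ of tautological type, e.g.\ built from the universal subscheme $\Xi\subset K_n(A)\times A$ as $\beta=\pi_{1*}(\pi_2^*a_1\cdot[\Xi])$ for $a_1\in H^1(A)$ and similarly for $\gamma$. The integral $\int_X\alpha\beta\gamma\,q_X^{n-2}$ can be pushed to $A$ via the projection formula combined with Nakajima's commutator relations --- or the Lehn--Sorger description of the cup product on $A^{[n+1]}$ --- and should reduce to a non-zero combinatorial multiple of an integral of the form $\int_A\alpha\cdot a_1\cdot a_2\cdot b$, with $b\in H^2(A)$ easily arranged to make it nonzero. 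The main obstacle is this last step: tracing $q_X^{n-2}$ through the Fock-space calculus on the Hilbert scheme of an abelian surface and confirming that the combinatorial coefficient produced is nonzero for every $n\ge 2$, a computation in the spirit of Ellingsrud--G\"ottsche--Lehn. Once such a single non-vanishing is verified, the symmetry argument above promotes it to injectivity of $\Phi$, and hence to the theorem.
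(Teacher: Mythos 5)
Your reduction steps are sound and in fact parallel the paper's: the paper also reduces the theorem to showing that the O'Grady map is not identically zero, by deforming to a very general complex structure where the Hodge structure on $H^2(X,\mathbb{Q})$ is simple (so that the image of the morphism of Hodge structures is either $0$ or all of $H^{4n-2}$); your monodromy-irreducibility argument achieves the same reduction. But the heart of the theorem is the single non-vanishing $\int_X\alpha\,\beta\,\gamma\,Q_X^{n-2}\neq 0$, and this is exactly the step you leave unproved: you describe a Nakajima/Fock-space computation on $K_n(A)$ that ``should'' produce a nonzero combinatorial coefficient for every $n\geq 2$, and you yourself flag the verification of that coefficient as the main obstacle. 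As it stands this is a genuine gap --- it is essentially O'Grady's original explicit-computation route, and without carrying the computation through (which is delicate: one must trace $Q_X^{n-2}$ through the cup-product structure of the Hilbert scheme and rule out accidental cancellation for all $n$) the proof is not complete.

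The paper sidesteps this computation entirely. By the hard Lefschetz theorem the pairing $\langle\alpha,\beta\rangle_\omega=\int_X\omega^{2n-3}\cup\alpha\cup\beta$ is nondegenerate on $H^3(X,\mathbb{R})$, so the cup-product map $\psi:\bigwedge^2H^3(X,\mathbb{Q})\rightarrow H^6(X,\mathbb{Q})$ has image pairing nontrivially with the image of ${\rm Sym}^{2n-3}H^2(X,\mathbb{Q})$ in $H^{4n-6}(X,\mathbb{Q})$. For $X$ very general the Mumford--Tate group of $H^2$ is $O(q_X)$, and ${\rm Sym}^{2n-3}H^2(X,\mathbb{Q})$ decomposes into the irreducible pieces $Q_X^l\,{\rm Sym}^{2n-3-2l}H^2(X,\mathbb{Q})^0$; every piece with $2n-3-2l>1$ carries a Hodge structure of level $>2$ (it contains $Q_X^l\sigma_X^{2n-3-2l}$), whereas the pairing with ${\rm Im}\,\psi$ factors through a Hodge structure of level at most $2$ because $H^3(X,\mathbb{Q})$ has level $1$. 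Hence the nontrivial pairing is necessarily concentrated on the piece $Q_X^{n-2}H^2(X,\mathbb{Q})$, which says precisely that $Q_X^{n-2}\cup\psi$ is nonzero. This argument requires no model of the cohomology ring and proves the statement for every hyper-K\"{a}hler $2n$-fold with $b_3(X)\neq 0$, which is the generalization the paper is after. To complete your proposal you would have to actually evaluate the triple integral on $K_n(A)$; alternatively, replace your third step by the hard Lefschetz plus Hodge-level argument above.
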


Here $Q_X\in H^4(X,\mathbb{Q})$ is a cohomology class which is constructed using the Beauville-Bogomolov form (see Section \ref{seclef}).
Our first result is the following generalization of  Theorem \ref{thog}.
\begin{theo} \label{theointro}  Let $X$ be a hyper-K\"{a}hler $2n$-fold such that $b_3(X)\not=0$.  Then

(1)  The composite
map
$$\bigwedge^2H^3(X,\mathbb{Q})\rightarrow H^6(X,\mathbb{Q})\stackrel{Q_X^{n-2}}{\rightarrow} H^{4n-2}(X,\mathbb{Q})$$
is surjective.

(2) Assuming $X$ is projective, the intermediate Jacobian $J^3(X)$  contains a simple component of the Kuga-Satake abelian variety of
$H^2(X,\mathbb{Q})_{\rm tr}$.

(3) One has $b_3(X)\geq 2^k$, where $k= \frac{b_2(X)-2}{2}$ if $b_2(X)$ is even,  $k= \frac{b_2(X)-1}{2}$ if $b_2(X)$ is odd.
\end{theo}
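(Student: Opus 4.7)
My plan is to prove the surjectivity in (1) via a Poincar\'e-duality reformulation together with Verbitsky/LLV Lefschetz-type inputs, to deduce (2) from (1) by the universality of the Kuga--Satake construction, and to derive (3) from (2) by a Clifford-algebra dimension count on a suitable projective deformation.

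\emph{Part (1).} By Poincar\'e duality $H^{4n-2}(X,\mathbb{Q})\cong H^2(X,\mathbb{Q})^{\vee}$ (up to a Tate twist), so the surjectivity in question is equivalent to the injectivity of the transpose
$$\Phi^{\vee}:H^2(X,\mathbb{Q})\to\Bigl(\bigwedge^2 H^3(X,\mathbb{Q})\Bigr)^{\vee},\qquad\alpha\mapsto\Bigl(\beta\wedge\gamma\mapsto\int_X\alpha\cup\beta\cup\gamma\cup Q_X^{n-2}\Bigr).$$
Fix $\alpha\in H^2(X,\mathbb{Q})$ nonzero; by Zariski density I may assume $q(\alpha)\ne 0$. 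I would first observe that $\alpha\cup Q_X^{n-2}\in H^{4n-6}(X,\mathbb{Q})$ is nonzero, as a consequence of a Verbitsky--LLV Lefschetz statement asserting the injectivity of cup product by $Q_X^{n-2}$ from $H^2$ to $H^{4n-6}$. The core step is then to show that the map $H^3\to H^{4n-3}$ given by multiplication by $\alpha\cup Q_X^{n-2}$ is nonzero; for this I would compare it with the Verbitsky hard-Lefschetz isomorphism $H^3\to H^{4n-3}$ given by multiplication by $\alpha^{2n-3}$, both operators lying in the subalgebra of $\mathrm{End}(H^*(X,\mathbb{Q}))$ generated by the LLV--Lefschetz operators together with multiplication by $Q_X$. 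Once we produce a class $\gamma\in H^3$ with $\alpha\cup\gamma\cup Q_X^{n-2}\ne 0$, Poincar\'e duality on $H^3$ yields a class $\beta\in H^3$ with $\int_X\alpha\cup\beta\cup\gamma\cup Q_X^{n-2}\ne 0$, so $\Phi^{\vee}(\alpha)\ne 0$.

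\emph{Part (2).} The map $\Phi$ from (1) is a surjective morphism of Hodge structures $\bigwedge^2 H^3(X,\mathbb{Q})\to H^{4n-2}(X,\mathbb{Q})$. Composing its transpose with the orthogonal projection $H^2(X,\mathbb{Q})\twoheadrightarrow H^2(X,\mathbb{Q})_{\rm tr}$ and using the BBF self-duality of $H^2_{\rm tr}$, we obtain, up to Tate twist, an embedding of Hodge structures
$$H^2(X,\mathbb{Q})_{\rm tr}\hookrightarrow\bigwedge^2 H^1(J^3(X),\mathbb{Q}).$$
The universality property of the Kuga--Satake construction (Charles \cite{charles}, van Geemen--Voisin) then forces $KS(H^2(X,\mathbb{Q})_{\rm tr})$ to be isogenous to an abelian subvariety of $J^3(X)$, whence any simple component of $KS(H^2_{\rm tr})$ is a simple component of $J^3(X)$.

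\emph{Part (3).} Since $b_3(X)$ is a topological invariant, I may replace $X$ by a projective deformation with Picard rank $1$, so that $m:=\dim_{\mathbb{Q}}H^2(X,\mathbb{Q})_{\rm tr}=b_2(X)-1$. By (2), $b_3(X)\geq 2\dim B$ for some simple factor $B$ of $KS(H^2_{\rm tr})$. The dimension of $B$ is bounded below via the structure of the even Clifford algebra $C^+(H^2_{\rm tr})$: writing it as a matrix algebra (or as a product of two matrix algebras when $m$ is even with split BBF discriminant) over a $\mathbb{Q}$-division algebra $D$, the numerical constraint $\dim_{\mathbb{Q}}C^+(H^2_{\rm tr})=2^{m-1}$ combined with the rationality of the half-spin representation forces $\dim B\geq 2^{k-1}$ for $k$ as in the statement; substituting $m=b_2(X)-1$ yields the stated bound.

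The main obstacle is (1). Although the LLV representation provides a strong organizing principle, $H^3(X,\mathbb{Q})$ lies in the odd, spinor-type part of $H^*(X,\mathbb{Q})$, which is less transparent than Verbitsky's subalgebra generated by $H^2$; the heart of the argument is the comparison between the operators $\alpha\cup Q_X^{n-2}\cup(\cdot)$ and $\alpha^{2n-3}\cup(\cdot)$ on $H^3$. Parts (2) and (3) then follow formally, modulo the now-standard universality of the Kuga--Satake correspondence and routine Clifford-algebra bookkeeping.
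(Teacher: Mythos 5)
Your outline for (1) stops exactly where the real difficulty begins, so the central step is missing. Reformulating surjectivity as injectivity of the transpose and reducing to the nonvanishing of multiplication by $\alpha\cup Q_X^{n-2}$ on $H^3$ is fine, but ``comparing'' this operator with the hard Lefschetz operator $\alpha^{2n-3}$ inside the LLV algebra is precisely the point that needs a proof, and it is not formal: inside $\mathrm{Sym}^{2n-3}H^2(X,\mathbb{Q})$ the class $\alpha^{2n-3}$ decomposes along the pieces $Q_X^{l}\,\mathrm{Sym}^{2n-3-2l}H^2(X,\mathbb{Q})^0$, and hard Lefschetz only says that $\bigwedge^2H^3(X,\mathbb{Q})$ pairs nondegenerately with the full symmetric power; a priori all of that pairing could be carried by the pieces with $l<n-2$, in which case $Q_X^{n-2}\cup\alpha$ could kill $H^3$. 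The paper rules this out Hodge-theoretically: for a very general complex structure the Mumford--Tate group of $H^2$ is $O(q_X)$, so every Hodge substructure of $\mathrm{Sym}^{2n-3}H^2(X,\mathbb{Q})^*$ is a sum of the pieces above; the pieces with $2n-3-2l>1$ contain $Q_X^l\sigma_X^{2n-3-2l}$ and hence have Hodge level $>2$, while the image of $\bigwedge^2H^3$ has level at most $2$ because $H^3$ has level $1$; therefore the nontrivial pairing must factor through $Q_X^{n-2}H^2(X,\mathbb{Q})$, which is the desired nonvanishing. Your plan never invokes the level-$1$ property of $H^3$, and without some such input the key nonvanishing is not established. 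Also, ``by Zariski density I may assume $q(\alpha)\neq 0$'' is unjustified as stated: the kernel of your transpose is a linear subspace which could a priori be isotropic; what one really uses is that this kernel is a Hodge substructure of $H^2$, which is simple for a general complex structure (this is also how the paper passes from nonvanishing of $\phi$ to surjectivity).

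Parts (2) and (3) also have problems. In (2) you apply universality in the wrong direction: Charles's theorem says that a simple weight-$1$ Hodge structure $H$ admitting an embedding $H^2\hookrightarrow H^{\otimes a}\otimes(H^\vee)^{\otimes b}$ is a subquotient of the Kuga--Satake Hodge structure; it does not make $KS(H^2_{\rm tr})$ an abelian subvariety of $J^3(X)$ --- in the generalized Kummer case $KS(X)$ is $16$-dimensional while $J^3(X)$ is a fourfold, so your intermediate claim is impossible. The correct (and sufficient) conclusion is that a simple constituent of $H^3(X,\mathbb{Q})$, hence a simple factor of $J^3(X)$, occurs in $KS$. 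In (3), passing to a projective deformation with $\rho=1$ replaces the orthogonal space by $H^2_{\rm tr}$ of rank $b_2(X)-1$, and your Clifford-algebra count then yields only $2^{(b_2-3)/2}$ when $b_2(X)$ is odd, which is weaker than the asserted $2^{(b_2-1)/2}$. The paper avoids this loss by deforming to a very general \emph{non-projective} $X$, where the full $H^2(X,\mathbb{Q})$ of rank $b_2(X)$ has maximal Mumford--Tate group, and by using the unpolarized form of Charles's universality, which directly bounds below by $2^k$ the rank of any simple constituent of $H^3$, hence $b_3(X)$.
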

We will also prove  similar results, and in particular the bound
\begin{eqnarray}\label{eqboundb2n-1} b_{2n-1}(X)\geq 2^k, \,\,{\rm if}\,\,H^{2n-1}(X,\mathbb{Q})\not=0,\,\,H^{2n-3}(X,\mathbb{Q})=0.
\end{eqnarray}
where $k$ is as in (3).  In particular, we get the following corollary in dimension $6$.
\begin{coro} Let $X$ be a hyper-K\"{a}hler $6$-fold such that $b_{\rm odd}(X)\not=0$. Then $b_{\rm odd}(X)\geq 2^k$, where $k= \frac{b_2(X)-2}{2}$ if $b_2(X)$ is even,  $k= \frac{b_2(X)-1}{2}$ if $b_2(X)$ is odd.
\end{coro}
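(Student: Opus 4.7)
The plan is a short case analysis combining part (3) of Theorem \ref{theointro} with the companion bound \eqref{eqboundb2n-1}. In dimension $2n=6$ the only possibly non-zero odd cohomology degrees are $1,3,5$, and Poincar\'{e} duality identifies $b_5(X)=b_1(X)$. Thus $b_{\rm odd}(X)=2b_1(X)+b_3(X)$, and the hypothesis $b_{\rm odd}(X)\neq 0$ forces at least one of $H^3(X,\mathbb{Q})$ or $H^5(X,\mathbb{Q})$ to be non-zero.

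First I would dispose of the case $H^3(X,\mathbb{Q})\neq 0$. Here Theorem \ref{theointro}(3) immediately gives $b_3(X)\geq 2^k$, and hence $b_{\rm odd}(X)\geq b_3(X)\geq 2^k$. The remaining case is $H^3(X,\mathbb{Q})=0$ but $H^5(X,\mathbb{Q})\neq 0$. With $n=3$ one has $2n-1=5$ and $2n-3=3$, so these conditions are exactly the hypotheses $H^{2n-1}(X,\mathbb{Q})\neq 0$, $H^{2n-3}(X,\mathbb{Q})=0$ appearing in \eqref{eqboundb2n-1}. Invoking \eqref{eqboundb2n-1} yields $b_5(X)\geq 2^k$, and again $b_{\rm odd}(X)\geq b_5(X)\geq 2^k$. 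This exhausts the possibilities, so the corollary is proved.

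There is no real obstacle: the corollary is a specialization of the general results already stated for $2n$-folds to the case $\dim X=6$, exploiting that in dimension $6$ the three odd degrees $\{1,3,5\}$ collapse, under Poincar\'{e} duality, to the two cases covered by Theorem \ref{theointro}(3) and \eqref{eqboundb2n-1}. All the substance of the argument sits in those two bounds; passing from them to this corollary is formal.
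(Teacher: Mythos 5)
Your case split is exactly the paper's argument: if $H^3(X,\mathbb{Q})\neq 0$ apply the $b_3$ bound (Theorem \ref{theob3}), and if $H^3(X,\mathbb{Q})=0$ but $H^5(X,\mathbb{Q})\neq 0$ apply the $b_{2n-1}$ bound \eqref{eqboundb2n-1} with $2n=6$, so the substance is right and matches the paper. One bookkeeping slip: a hyper-K\"ahler $6$-fold has real dimension $12$, so the odd degrees are $1,3,5,7,9,11$ and Poincar\'e duality gives $b_{11}=b_1$, $b_9=b_3$, $b_7=b_5$ --- not $b_5=b_1$ --- hence $b_{\rm odd}=2(b_1+b_3+b_5)$ rather than $2b_1(X)+b_3(X)$. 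The dichotomy you need ($H^3\neq 0$ or $H^5\neq 0$) still holds, but because $b_1(X)=0$ (irreducible hyper-K\"ahler manifolds are simply connected), not via the identification $b_5=b_1$; with that correction your proof is the paper's proof.
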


The Betti numbers of hyper-K\"{a}hler  manifolds have been studied  in  \cite{salamon}, \cite{guan} which establishes very precise bounds in dimension $4$ and  in  \cite{sawon} which claims similar bounds in dimension 6 (but the proof seems to be incomplete). The paper \cite{kimlaza}  gives very precise conjectural  bounds (for example bounds  on $b_2$ depending only on the dimension), depending on a conjecture on  the Looijenga-Lunts representation \cite{loo}, \cite{verb}.  The subject  remains however wide open.

  A key point in both cases  is the fact that the weight $3$ or weight $2n-1$ Hodge structure one considers is of Hodge level $1$, that is,  they satisfy the property $h^{p,q}=0$ for $|p-q|>1$. As we already mentioned, the points (2) and (3) follow, using this observation,  from the point  (1), and from a universality property for the Kuga-Satake weight $1$ Hodge structure, proved by Charles in \cite{charles}, even in the unpolarized case.

The second part of this paper provides a complement to Markman's paper \cite{markman}. In this paper, Markman proves the Hodge conjecture for the Weil Hodge classes on the Weil abelian fourfolds appearing in Theorem \ref{theoogrady}. He also proves that the Abel-Jacobi map
$$\Phi_X:{\rm CH}^2(X)_{\rm alg}\rightarrow J^3(X),$$
defined on the group of codipmension 2 cycles on $X$ algebraically equivalent  to $0$,
is surjective for a projective hyper-K\"{a}hler manifold $X$ of generalized Kummer deformation type. This statement was expected as a consequence of  the generalized Hodge conjecture because $H^{3,0}(X)=0$ (see \cite{voisintorino}).

Our second result is the following
\begin{theo}  For $X$ a projective hyper-K\"{a}hler manifold of generalized Kummer deformation type with $n\geq2$, the Kuga-Satake correspondence between $X$ and its Kuga-Satake variety $KS(X)$  is algebraic.
\end{theo}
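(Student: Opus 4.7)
My plan is to exhibit the Kuga-Satake correspondence as a composition of algebraic correspondences, combining three tools: Markman's algebraicity of Weil Hodge classes on $J^3(X)$, Markman's surjectivity of the Abel-Jacobi map $\Phi_X : \mathrm{CH}^2(X)_{\mathrm{alg}} \to J^3(X)$, and the structural result of Theorem \ref{theointro}. By Theorem \ref{theointro}(2) combined with O'Grady's Theorem \ref{theoogradyintro}(2), for a very general projective $X$ in the deformation class $KS(X)$ is isogenous to a power $J^3(X)^{\times k}$ of the intermediate Jacobian. Since algebraicity of a Hodge class spreads within a family over which it remains a Hodge class (by a standard Hilbert scheme argument combined with Cattani--Deligne--Kaplan), it suffices to produce an algebraic cycle class on $X \times J^3(X)^{\times k}$ realizing the Kuga-Satake embedding $H^2(X,\mathbb{Q})_{\mathrm{tr}} \hookrightarrow H^2(J^3(X)^{\times k},\mathbb{Q})$ for such $X$.

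I then decompose this embedding using Theorem \ref{theointro}(1). Dualizing the surjection $\bigwedge^2 H^3(X,\mathbb{Q}) \twoheadrightarrow H^{4n-2}(X,\mathbb{Q})$ via Poincar\'e duality on $X$ yields, up to Tate twist, an inclusion of Hodge structures
$$H^2(X,\mathbb{Q})_{\mathrm{tr}} \hookrightarrow H^2(J^3(X),\mathbb{Q}),$$
using the canonical identification $H^1(J^3(X),\mathbb{Q}) \cong H^3(X,\mathbb{Q})^{\vee}$. This inclusion is induced by an algebraic correspondence: the operator $\cup Q_X^{n-2}$ acts on cohomology via the algebraic class $Q_X^{n-2}$ on $X$ (see Section \ref{seclef}), while the identification $H^3(X,\mathbb{Q}) \cong H^1(J^3(X),\mathbb{Q})^{\vee}$ is realized by algebraic cycles on $X \times J^3(X)$ extracted from Markman's surjectivity of $\Phi_X$: one picks a smooth family of algebraically trivial codimension-two cycles witnessing the surjection, producing the desired correspondence after integration over the parameter space.

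Finally, the Kuga-Satake map $H^2(X,\mathbb{Q})_{\mathrm{tr}} \hookrightarrow H^2(J^3(X)^{\times k},\mathbb{Q})$ coming from the Clifford representation factors through the algebraic inclusion above and a map $H^2(J^3(X)) \to H^2(J^3(X)^{\times k})$ whose K\"unneth components in $H^1(J^3(X))^{\otimes 2}$ are Hodge classes of Weil type in the sense of Theorem \ref{theoogradyintro}(1). Markman's Hodge conjecture for the Weil Hodge classes of $J^3(X)$ then yields their algebraicity, and composing all the algebraic correspondences produces the desired algebraic cycle on $X \times KS(X)$.

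The principal obstacle is the identification in the last paragraph: one must explicitly match the abstract Clifford-algebraic description of the Kuga-Satake embedding $H^2(X)_{\mathrm{tr}} \hookrightarrow \mathrm{End}(H^1(KS(X)))$ with the $K$-vector space structure on $H^1(J^3(X))$ ($K = \mathbb{Q}[\sqrt{-d}]$, from Theorem \ref{theoogradyintro}(1)), and verify that the resulting Hodge classes lie in the span of those covered by Markman's theorem. This matching, which closely parallels the use of Theorem \ref{theointro}(2) to factor $KS(X)$ through $J^3(X)$, is the technical heart of the argument.
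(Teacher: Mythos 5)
Your overall strategy overlaps substantially with the paper's: use Markman's surjectivity of the Abel--Jacobi map (Theorem \ref{theomarkman}) to extract a cycle $\mathcal{Z}\in{\rm CH}^2(J^3(X)\times X)_{\mathbb{Q}}$ realizing the identification $H_1(J^3(X),\mathbb{Q})\cong H^3(X,\mathbb{Q})$, combine it with the algebraic class $Q_X^{n-2}$, and recognize the resulting correspondence (in the paper, $\Gamma=\mathcal{Z}^2\cdot{\rm pr}_X^*\mathcal{Q}_X^{n-2}$) as inducing the O'Grady map on $H_2(J^3(X),\mathbb{Q})=\bigwedge^2H^3(X,\mathbb{Q})$, i.e.\ the transpose of the Kuga--Satake embedding. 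One small repair first: the algebraicity of $Q_X$ is not contained in Section \ref{seclef}, where $Q_X$ is only defined as a cohomology class via Verbitsky's theorem; you need Markman's result \cite{markmanBBcahar} that this class is algebraic on manifolds of generalized Kummer type.

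The genuine gap is in your last two paragraphs, where you both leave the decisive step open and reach for the wrong tool to close it. Passing from the algebraic inclusion $H^2(X,\mathbb{Q})_{\rm tr}\hookrightarrow H^2(J^3(X),\mathbb{Q})$ to an embedding into $H^2(J^3(X)^{\times k},\mathbb{Q})$ does not require Markman's Hodge conjecture for Weil classes: those classes live in $\bigwedge^4_KH^1(J^3(X),\mathbb{Q})\subset H^4(J^3(X),\mathbb{Q})$, whereas the K\"unneth $(1,1)$-components of a correspondence between copies of $J^3(X)$ are induced by homomorphisms of abelian varieties and are algebraic for free. What actually finishes the proof --- the point you defer as the ``principal obstacle'' --- is that the identification of $J^3(X)$ with a component of $KS(X)$ in Theorem \ref{theoogrady} is obtained by applying the universality property (Theorem \ref{theouniv}) precisely to the embedding $H^2(X,\mathbb{Q})\hookrightarrow H^3(X,\mathbb{Q})^*\otimes H^3(X,\mathbb{Q})^*$ dual to the O'Grady map; hence the transpose of (\ref{eqmorohKS}) \emph{is} the O'Grady map by construction, and no explicit matching of the Clifford-algebra description with the $K$-vector space structure on $H^1(J^3(X),\mathbb{Q})$ is required. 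Without this observation your argument stops exactly at the step you yourself call its technical heart. The detour through very general $X$ and Cattani--Deligne--Kaplan spreading is then also unnecessary, since the cycle $\Gamma$ is produced directly on $J^3(X)\times X$ for the given $X$.
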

To explain this statement, the Kuga-Satake construction  in the polarized case produces  an abelian variety $KS(X)$ associated to the polarized Hodge structure
$(H^2(X,\mathbb{Q})_{\rm tr},(\,,\,))$  which has the property that $H^2(X,\mathbb{Q})_{\rm tr}$ is a  Hodge substructure of $H^2(KS(X),\mathbb{Q})$.  The Hodge conjecture predicts the existence of a correspondence between $X$ and $KS(X)$, that is an algebraic cycle $\Gamma$ of codimension $2n$ with $\mathbb{Q}$-coefficients   in $X\times KS(X)$,  such that $\Gamma_*$ induces the given embedding
$H^2(X,\mathbb{Q})_{\rm tr} \hookrightarrow H^2(KS(X),\mathbb{Q})$.  The meaning of the ``algebraicity of the Kuga-Satake correspondence"  is the  existence of such cycle $\Gamma$ (see \cite{geemen} for a general discussion).

The algebraicity of the Kuga-Satake correspondence is known for projective $K3$ surfaces with Picard number at least $17$ \cite{morrison}. It is also known by work of Paranjape \cite{paranjape} for $K3$ surfaces with Picard number $16$ obtained as desingularizations of  double covers of $\mathbb{P}^2$ ramified along $6 $ lines. Some hyper-K\"{a}hler examples involving cubic fourfolds  have been exhibited in \cite{voisinsym}.
\section{Applications of the hard Lefschetz theorem}

\subsection{Degree 3 cohomology: complement to a paper of O'Grady \label{seclef}}
 Let $X$ be a hyper-K\"{a}hler manifold of dimension $2n$ with $n\geq 2$. The Beauville-Bogomolov quadratic form $q_X$ is a nondegenerate quadratic form on $H^2(X,\mathbb{Q})$, whose inverse gives an element of ${\rm Sym}^2H^2(X,\mathbb{Q})$. By Verbitsky \cite{verbitsky}, the later space imbeds by cup-product in $H^4(X,\mathbb{Q})$, hence we get a class
 \begin{eqnarray} \label{eqQX}Q_X\in H^4(X,\mathbb{Q}).
 \end{eqnarray}
  The O'Grady map
 $ \phi:\bigwedge^2H^3(X,\mathbb{Q})\rightarrow H^{4n-2}(X,\mathbb{Q})$
 is defined by \begin{eqnarray}\label{eqogradymap}\phi(\alpha\wedge \beta)=Q_X^{n-2}\cup \alpha\cup \beta. \end{eqnarray}

 The following result was first  proved by O'Grady \cite{ogrady} in the case of a hyper-K\"{a}hler manifold of generalized Kummer deformation type.
\begin{theo} \label{theosurj3} Let $X$ be a hyper-K\"{a}hler manifold of dimension $2n$. Assume $H^3(X,\mathbb{Q})\not=0$. Then
the O'Grady map map $\phi: \bigwedge^2H^3(X,\mathbb{Q})\rightarrow H^{4n-2}(X,\mathbb{Q})$ is surjective.
\end{theo}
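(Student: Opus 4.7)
The plan is to reformulate surjectivity of $\phi$ as an injectivity statement via Poincar\'{e} duality, then reduce to a single non-vanishing using $\mathrm{Mon}(X)$-irreducibility of $H^2(X,\mathbb{Q})$, and finally verify that non-vanishing for a K\"{a}hler class via hard Lefschetz.

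\emph{Step 1 (dualisation).} Poincar\'{e} duality gives $H^{4n-2}(X,\mathbb{Q})^{\vee}\cong H^2(X,\mathbb{Q})$, so surjectivity of $\phi$ is equivalent to injectivity of the transposed map
$$\iota\colon H^2(X,\mathbb{Q})\longrightarrow \mathrm{Hom}\bigl({\textstyle\bigwedge^2} H^3(X,\mathbb{Q}),\mathbb{Q}\bigr),\qquad \iota(\eta)(\alpha\wedge\beta)=\int_X\eta\cup Q_X^{n-2}\cup\alpha\cup\beta.$$
Setting $B_\eta(\alpha,\beta):=\iota(\eta)(\alpha\wedge\beta)$, this is automatically antisymmetric since $\alpha$ and $\beta$ have odd degree, and I must prove $B_\eta\not\equiv 0$ for every nonzero $\eta\in H^2(X,\mathbb{Q})$.

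\emph{Step 2 (monodromy reduction).} Since $Q_X$ is built canonically from $q_X$, the map $\iota$ is $\mathrm{Mon}(X)$-equivariant; hence $\ker\iota\subseteq H^2(X,\mathbb{Q})$ is a $\mathrm{Mon}(X)$-subrepresentation. By a theorem of Verbitsky, the identity component of $\mathrm{Mon}(X)$ is a finite-index subgroup of $\mathrm{SO}(H^2(X,\mathbb{Q}),q_X)$, which acts irreducibly on $H^2(X,\mathbb{Q})$ (as $b_2(X)\geq 3$). Therefore $\ker\iota=0$ or $\ker\iota=H^2(X,\mathbb{Q})$, and it is enough to exhibit a single class $\eta=\omega$ with $B_\omega\not\equiv 0$.

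\emph{Step 3 (non-vanishing for a K\"{a}hler class).} Take $\omega$ K\"{a}hler. Since $b_1(X)=0$ for a hyper-K\"{a}hler manifold, every class of $H^3(X,\mathbb{Q})$ is $\omega$-primitive, so hard Lefschetz yields an isomorphism $L_\omega^{2n-3}\colon H^3(X,\mathbb{Q})\stackrel{\sim}{\to}H^{4n-3}(X,\mathbb{Q})$ and a non-degenerate antisymmetric pairing $\Omega_\omega(\alpha,\beta):=\int_X\omega^{2n-3}\cup\alpha\cup\beta$. The task reduces to showing that the two maps
$$L_\omega^{2n-3},\ L_\omega\circ L_{Q_X}^{n-2}\colon H^3(X,\mathbb{Q})\longrightarrow H^{4n-3}(X,\mathbb{Q})$$
are proportional with a nonzero factor, so that $B_\omega$ equals a nonzero multiple of $\Omega_\omega$ and is thus not identically zero.

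The main obstacle is establishing this proportionality. The two classes $\omega^{2n-3}$ and $\omega\cup Q_X^{n-2}$ in $H^{4n-6}(X,\mathbb{Q})$ need not coincide, so the identity must hold only as operators restricted to $H^3(X,\mathbb{Q})$; moreover the element $\alpha\cup\beta\in H^6(X,\mathbb{Q})$ against which one pairs need not lie in the Verbitsky component of $H^*(X,\mathbb{Q})$, so a naive application of Fujiki-type relations for the subring generated by $H^2$ is insufficient. A reasonable conceptual route is representation-theoretic: identify $H^3(X,\mathbb{Q})$ as a lowest-weight subspace of the irreducible Looijenga--Lunts--Verbitsky component of $H^*(X,\mathbb{Q})$ containing it, and use that $L_{Q_X}=\sum q^{ij}L_{e_i}L_{e_j}$ is a quadratic expression in the Lefschetz raising operators, hence, up to correction terms coming from the Lefschetz $\mathfrak{sl}_2$ for $\omega$, acts by a scalar on each irreducible LLV-component, determined by its highest weight. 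Combined with hard Lefschetz, this yields the sought proportionality, the non-vanishing of the constant following from $q_X(\omega)\neq 0$ and the hypothesis $H^3(X,\mathbb{Q})\neq 0$, which ensures that the ambient LLV-component is itself nonzero.
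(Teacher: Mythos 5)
Your Steps 1--2 are a sound reduction (modulo phrasing: $\mathrm{Mon}(X)$ is a discrete group, so it is not ``a finite-index subgroup of $\mathrm{SO}(H^2(X,\mathbb{Q}),q_X)$''; what you need, and what Verbitsky's theorem gives for $q_X$ of indefinite signature, is that the monodromy is Zariski-dense in $O(q_X)$, hence acts irreducibly on $H^2(X,\mathbb{Q})$ -- the paper instead deforms to a very general complex structure and uses simplicity of the Hodge structure on $H^2$, which avoids any worry about small $b_2$). The genuine gap is Step 3, and you flag it yourself: the whole proof hinges on the proportionality of $L_\omega^{2n-3}$ and $L_\omega\circ L_{Q_X}^{n-2}$ on $H^3(X,\mathbb{Q})$, and this is never established. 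The sketch offered does not work as stated: $L_{Q_X}=\sum q^{ij}L_{e_i}L_{e_j}$ is a degree $+4$ raising operator, so it cannot ``act by a scalar on each irreducible LLV-component''; it is not a Casimir, and ``up to correction terms coming from the Lefschetz $\mathfrak{sl}_2$'' is not a statement one can verify. Moreover the proportionality you ask for is much stronger than what is needed (a single non-vanishing value of the pairing), and checking it would require knowing how cup-product by $Q_X$ acts on the LLV-modules occurring in $H^{\mathrm{odd}}(X)$ -- information the hypothesis $H^3(X,\mathbb{Q})\neq 0$ does not supply. In fact the only reason the target statement $B_\omega\not\equiv 0$ is true is the theorem itself (a posteriori, injectivity of $\iota$), so Step 3 as written is close to circular unless the operator identity is proved independently.

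The paper's proof of non-vanishing takes a different and much softer route, which is exactly the device you are missing. Hard Lefschetz only gives that $\mathrm{Im}\,\psi\subset H^6(X,\mathbb{Q})$ (the image of cup-product on $\bigwedge^2H^3$) pairs nontrivially with the image of $\mathrm{Sym}^{2n-3}H^2(X,\mathbb{Q})$ in $H^{4n-6}$, i.e.\ the induced map $\mathrm{Im}\,\psi\rightarrow \left(\mathrm{Sym}^{2n-3}H^2(X,\mathbb{Q})\right)^*$ is nonzero -- with no control over which class $\eta$ realizes the non-vanishing. One then uses that $H^3$ has Hodge level $1$, so this image carries a Hodge structure of level at most $2$, while for $X$ very general (Mumford--Tate group of $H^2$ equal to $O(q_X)$) the decomposition $\mathrm{Sym}^{2n-3}H^2=\bigoplus_l Q_X^l\,\mathrm{Sym}^{2n-3-2l}H^2{}^0$ into irreducible Hodge substructures has all summands with $2n-3-2l>1$ of level $>2$ (they contain $Q_X^l\sigma_X^{2n-3-2l}$). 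Hence the nonzero pairing must factor through the summand $Q_X^{n-2}H^2$, which is precisely the non-vanishing of the O'Grady map. This argument works entirely inside $\mathrm{Sym}^{\bullet}H^2$, whose $O(q_X)$-decomposition is known, rather than inside the unknown odd cohomology, and it never needs to identify a specific $\eta\in H^2$ (let alone $\eta=\omega$) pairing nontrivially -- which is exactly why no operator identity of the kind you propose is required. To repair your proposal, replace Step 3 by this level/weight argument on the target of the pairing, or else supply a genuine proof of the proportionality, which is not available from the ingredients you cite.
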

\begin{proof}  We can choose the complex structure on $X$ to be general, so that $\rho(X)=0$, and this implies that the Hodge structure on $H^2(X,\mathbb{Q})$ (or equivalently $H^{4n-2}(X,\mathbb{Q})$ as they are isomorphic  by combining Poincar\'{e} duality and the self-duality given by the  Beauville-Bogomolov form) is simple. As the morphism $\phi$ is a morphism of Hodge structures, its image is a Hodge substructure of $H^{4n-2}(X,\mathbb{Q})$, hence either $\phi$ is surjective, or it is $0$. Theorem \ref{theosurj3} thus follows fropm the next proposition.\end{proof}
\begin{prop}\label{prononzero}  The map $\phi$ is not identically $0$.
\end{prop}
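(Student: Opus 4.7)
The plan is to prove $\phi\not\equiv 0$ by comparison with the hard Lefschetz pairing on $H^3$. Fix a K\"ahler class $\omega\in H^2(X,\mathbb{R})$. By Poincar\'e duality on $X$, $\phi\ne 0$ is equivalent to the existence of $\alpha,\beta\in H^3(X)$ and $\gamma\in H^2(X)$ with $\int_X Q_X^{n-2}\cup\gamma\cup\alpha\cup\beta\ne 0$. Consider the two antisymmetric bilinear forms on $H^3(X,\mathbb{R})$
$$B_\omega(\alpha,\beta)=\int_X Q_X^{n-2}\cup\omega\cup\alpha\cup\beta,\qquad C_\omega(\alpha,\beta)=\int_X\omega^{2n-3}\cup\alpha\cup\beta.$$
By hard Lefschetz, $L_\omega^{2n-3}\colon H^3\to H^{4n-3}$ is an isomorphism, so $C_\omega$ is non-degenerate; it therefore suffices to show that $B_\omega$ is non-trivial for some K\"ahler $\omega$.

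The key observation is that both classes $Q_X^{n-2}\cup\omega$ and $\omega^{2n-3}$ lie in the Verbitsky subalgebra $SH^{4n-6}(X)\subset H^{4n-6}(X,\mathbb{Q})$ generated by $H^2(X)$. By Verbitsky's theorem, $SH(X)$ is a Poincar\'e subalgebra of $H^*(X,\mathbb{Q})$, so $H^6(X,\mathbb{Q})$ decomposes orthogonally as $SH^6(X)\oplus (SH^{4n-6})^\perp$ with respect to the top-degree pairing with $SH^{4n-6}$. Both $B_\omega$ and $C_\omega$ therefore depend on $\alpha,\beta$ only through the projection of $\alpha\cup\beta\in H^6$ onto $SH^6(X)$. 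The problem reduces to comparing two linear functionals on the image $W\subset SH^6(X)$ of $H^3\otimes H^3\to SH^6$, coming respectively from $Q_X^{n-2}\cup\omega$ and $\omega^{2n-3}$ via the Poincar\'e pairing. I would then establish a proportionality $B_\omega=\lambda_n(b_2,\omega)\cdot C_\omega$ on $H^3\otimes H^3$, for an explicit rational constant $\lambda_n$; the Fujiki formula, in the polarized form giving integrals in $SH(X)$ as polynomials in $q_X$-contractions of the input classes, allows one to compute $\lambda_n$ combinatorially.

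The \emph{main obstacle} is verifying that $\lambda_n\ne 0$. A direct Fujiki-style computation, analogous to the identity $\int_X Q_X\cup\omega^{2n-2}=\tfrac{b_2+2n-2}{2n-1}\,c_X\,q_X(\omega)^{n-1}$ (which one obtains easily for a single factor of $Q_X$ by pairing the two indices of $Q_X=\sum g^{ij}e_ie_j$ against each other or against the $\omega$'s), should yield $\lambda_n$ as a visibly nonzero rational function of $n$ and $b_2$. A more conceptual route is via the Looijenga-Lunts-Verbitsky Lie algebra $\mathfrak{so}(b_2+2)$ acting on $H^*(X,\mathbb{Q})$: the operator $L_{Q_X}=\sum_{i,j}g^{ij}L_{e_i}L_{e_j}$ is a ``raising Casimir'' in the enveloping algebra of LLV, and its iterated action on the $\Lambda$-annihilated subspace $H^3(X)$ can be computed from the LLV commutation relations, yielding the identity with $\lambda_n\ne 0$. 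Once the proportionality is in place, the non-degeneracy of $C_\omega$ propagates to $B_\omega$, and so $\phi\ne 0$.
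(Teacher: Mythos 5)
Your first step coincides with the paper's: hard Lefschetz makes $C_\omega(\alpha,\beta)=\int_X\omega^{2n-3}\cup\alpha\cup\beta$ nondegenerate, and the problem is to transfer this nonvanishing to $B_\omega(\alpha,\beta)=\int_X Q_X^{n-2}\cup\omega\cup\alpha\cup\beta$. But the transfer is exactly where your argument has a genuine gap. The proportionality $B_\omega=\lambda\, C_\omega$ on $H^3\otimes H^3$ is not a formal identity: the classes $Q_X^{n-2}\cup\omega$ and $\omega^{2n-3}$ are \emph{not} proportional in $SH^{4n-6}(X)$ (their difference involves the other $O(q_X)$-isotypic components $Q_X^l\,{\rm Sym}^{2n-3-2l}H^2{}^0$ of $\omega^{2n-3}$), so the two functionals on $SH^6(X)$ are genuinely different, and their proportionality can only hold after restriction to the image $W$ of $H^3\otimes H^3$. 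Neither of your proposed verifications can see $W$. Fujiki-type relations compute integrals of products of degree~$2$ classes only; they say nothing about the $SH^6$-component of $\alpha\cup\beta$ with $\alpha,\beta\in H^3$, so no combinatorial identity of that kind can yield $B_\omega=\lambda C_\omega$ on $H^3\otimes H^3$. Likewise, the operators $L_{Q_X}^{n-2}L_\omega$ and $L_\omega^{2n-3}$ are not proportional in the enveloping algebra of the LLV algebra, nor on a general graded module whose degree-$3$ piece is $\Lambda$-annihilated (on a generalized Verma module ${\rm Sym}^\bullet H^2\otimes M_3$ they send $v$ to $Q_X^{n-2}\omega\otimes v$ and $\omega^{2n-3}\otimes v$, which are independent for $n\geq 3$); whether they become proportional on $H^3(X)$ depends on which irreducible LLV modules actually occur with lowest piece in degree $3$, and ruling out the ``wide'' ones requires a geometric input that your proposal never invokes. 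So the main obstacle is not computing $\lambda_n\neq 0$, as you say, but proving the proportionality at all.

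That missing input is precisely what the paper supplies: the Hodge structure on $H^3$ has level $1$ (because $h^{3,0}=0$ on a hyper-K\"ahler manifold), so the image of ${\rm Im}\,\psi$ in ${\rm Sym}^{2n-3}H^2(X,\mathbb{Q})^*$ has Hodge level at most $2$; choosing the complex structure very general so that the Mumford--Tate group of $H^2$ is the full orthogonal group, any Hodge substructure of ${\rm Sym}^{2n-3}H^2$ is a sum of the isotypic pieces $Q_X^l\,{\rm Sym}^{2n-3-2l}H^2{}^0$, and all pieces with $2n-3-2l>1$ have level $>2$ (they contain $Q_X^l\sigma_X^{2n-3-2l}$). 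Hence ${\rm Im}\,\psi$ can pair nontrivially only with $Q_X^{n-2}H^2$, which both proves the proportionality you want (with the correct direction $C_\omega=\mu\, q_X(\omega)^{n-2}B_\omega$, so that nondegeneracy of $C_\omega$ forces $B_\omega\neq 0$ without computing any constant) and shows why it cannot be obtained from Fujiki or LLV commutation relations alone. To repair your argument you would need to add exactly this step, or an equivalent control of which LLV modules can contain $H^3$; as written, the key identity is asserted rather than proved.
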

\begin{proof} Let $\omega\in H^2(X,\mathbb{R})$ be a K\"{a}hler class. Then we know that the $\omega$-Lefschetz intersection pairing
$\langle \,,\,\rangle_\omega$ on $H^3(X,\mathbb{R})$, defined by
$$\langle \alpha,\beta\rangle_\omega:=\int_X\omega^{2n-3}\cup\alpha\cup \beta$$
is nondegenerate. This implies that
the cup-product map
$$\psi:\bigwedge^2H^3(X,\mathbb{Q})\rightarrow H^6(X,\mathbb{Q})$$
has the property that ${\rm Im}\,\psi$ pairs nontrivially with the image of the map
$${\rm Sym}^{2n-3}H^2(X,\mathbb{Q})\rightarrow H^{4n-6}(X,\mathbb{Q})$$
given by cup-product.
Note that the Hodge structure on $H^3(X,\mathbb{Q})$ has Hodge level $1$, so that the Hodge structure on the  image of  ${\rm Im}\,\psi$ in
${\rm Sym}^{2n-3}H^2(X,\mathbb{Q})^*$ is a Hodge structure of level at most $2$.
We now argue as in  \cite{voisinisot}. We choose $X$ very general so that the Mumford-Tate group of the Hodge structure on $H^2(X,\mathbb{Q})$ is the orthogonal group $O(q_X)$. Any
Hodge substructure of ${\rm Sym}^{2n-3}H^2(X,\mathbb{Q})^*\cong {\rm Sym}^{2n-3}H^2(X,\mathbb{Q})$ is thus a direct sum of $O(q_X)$-subrepresentations of ${\rm Sym}^{2n-3}H^2(X,\mathbb{Q})$.
Elementary representation theory of $O(q_X)$ then shows  that  the irreducible $O(q_X)$-subrepresentations of
${\rm Sym}^{2n-3}H^2(X,\mathbb{Q})$ are the subspaces
$$Q_X^l {\rm Sym}^{2n-3-2l}H^2(X,\mathbb{Q})^0,$$
where we see here $Q_X$ as an element of ${\rm Sym}^2H^2(X,\mathbb{Q})$, and
$${\rm Sym}^{k}H^2(X,\mathbb{Q})^0\subset {\rm Sym}^{k}H^2(X,\mathbb{Q})$$
can be  defined after passing to $\mathbb{C}$-coefficients as the subspace  generated by
$ \alpha^{k}$ with  $q_X(\alpha)=0$ (this definition  is correct with $\mathbb{Q}$-coefficients only if   the quadratic form $q_X$ has a zero).

The irreducible Hodge structure on $Q_X^l {\rm Sym}^{2n-3-2l}H^2(X,\mathbb{Q})^0$ has Hodge level $>2$ when $2n-3-2l>1$ since it contains the class
$Q_X^l\sigma_X^{2n-3-2l}$ which is of type $ (4n-6-4l,2l)$, where $\sigma_X$ generates $H^{2,0}(X)$. It follows that  ${\rm Im}\,\psi$ can pair nontrivially only with
$Q_X^{n-2} H^2(X,\mathbb{Q})$, hence the map $Q_X^{n-2}\psi$, which is the O'Grady map, is nonzero,  which concludes the proof.
\end{proof}

\subsection{Cohomology  of degree $2n-1$}
For other odd degree $2k-1\leq 2n-1$, one may wonder what  the hard  Lefschetz theorem gives.
The proof of Proposition \ref{prononzero} will give as well:
\begin{prop} \label{prosur2n-1} The composition
$$\psi': \bigwedge^2 H^{2k-1}(X,\mathbb{Q})\rightarrow H^{4k-2}(X,\mathbb{Q})\rightarrow {\rm Sym}^{2n-2k+1} H^2(X,\mathbb{Q})^*,$$
where the first map is the  cup-product and the second one is Poincar\'{e} dual to the cup-product map
${\rm Sym}^{2n-2k+1} H^2(X,\mathbb{Q})\rightarrow H^{4n-4k+2}(X,\mathbb{Q})$, is nontrivial (and even, nondegenerate).
\end{prop}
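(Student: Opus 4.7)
The strategy is to unwind the Poincaré duality in the definition of $\psi'$, reducing the statement to a direct application of the hard Lefschetz theorem; unlike the proof of Proposition \ref{prononzero}, no representation-theoretic narrowing is needed, because here one pairs against the full symmetric power $\mathrm{Sym}^{2n-2k+1} H^2(X,\mathbb{Q})$ rather than one of its $O(q_X)$-summands.

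First, I would rewrite $\psi'$ explicitly. For $\alpha,\beta\in H^{2k-1}(X,\mathbb{Q})$, the element $\psi'(\alpha\wedge\beta)\in \mathrm{Sym}^{2n-2k+1} H^2(X,\mathbb{Q})^{*}$ is by construction the linear form
$$\eta\longmapsto \int_{X}\eta\cup\alpha\cup\beta,$$
where $\eta\in \mathrm{Sym}^{2n-2k+1} H^2(X,\mathbb{Q})$ is first pushed into $H^{4n-4k+2}(X,\mathbb{Q})$ via cup-product. The nondegeneracy of $\psi'$ therefore amounts to the following statement: for any nonzero $\alpha\in H^{2k-1}(X,\mathbb{Q})$, there exists $\beta\in H^{2k-1}(X,\mathbb{Q})$ and $\eta\in \mathrm{Sym}^{2n-2k+1}H^2(X,\mathbb{Q})$ with $\int_X \eta\cup\alpha\cup\beta\neq 0$.

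Second, I would take $\eta$ to be a very specific class. Fix a Kähler class $\omega\in H^2(X,\mathbb{R})$ and set $\eta=\omega^{2n-2k+1}$. By the hard Lefschetz theorem, the Lefschetz intersection pairing
$$\langle \alpha,\beta\rangle_\omega=\int_{X}\omega^{2n-2k+1}\cup\alpha\cup\beta$$
on $H^{2k-1}(X,\mathbb{R})$ is nondegenerate. Hence for every nonzero $\alpha\in H^{2k-1}(X,\mathbb{Q})\subset H^{2k-1}(X,\mathbb{R})$ there is some $\beta\in H^{2k-1}(X,\mathbb{R})$ with $\langle\alpha,\beta\rangle_\omega\neq 0$; expanding $\beta$ in a rational basis of $H^{2k-1}(X,\mathbb{Q})$, one of the basis vectors provides a \emph{rational} $\beta$ with the same property. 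This $\beta$ witnesses that the linear form $\psi'(\alpha\wedge\beta)\in \mathrm{Sym}^{2n-2k+1}H^2(X,\mathbb{Q})^{*}$ does not vanish on $\omega^{2n-2k+1}\in \mathrm{Sym}^{2n-2k+1} H^2(X,\mathbb{R})$, and is therefore nonzero.

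I do not foresee any real obstacle: the whole content of the proposition is a direct reinterpretation of hard Lefschetz once the Poincaré duality appearing in the statement is unfolded. The only mild point to watch is the passage from real to rational coefficients (since Kähler classes live in $H^2(X,\mathbb{R})$), which is handled by the standard rational-basis argument indicated above.
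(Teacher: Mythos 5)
Your proof is correct and follows essentially the same route as the paper: the paper derives this proposition from the first half of the proof of Proposition \ref{prononzero}, namely the nondegeneracy of the $\omega$-Lefschetz pairing $\int_X\omega^{2n-2k+1}\cup\alpha\cup\beta$ on $H^{2k-1}(X,\mathbb{R})$, which is exactly your argument with $\eta=\omega^{2n-2k+1}$. Your explicit handling of the real-versus-rational issue is a sensible (if standard) addition that the paper leaves implicit.
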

However, we do not know a priori the Hodge level of $H^{2k-1}(X,\mathbb{Q})$ so we do not know to  which irreducible component  of the $O(q)$-representation of ${\rm Sym}^{2n-2k+1} H^2(X,\mathbb{Q})^*$ the image  ${\rm Im}\,\psi'$ can map nontrivially.
In  the case of degree $2k-1=2n-1$, we have  only one piece, namely $ H^2(X,\mathbb{Q})^*$, hence we get:
\begin{coro} \label{corsur2n-1} If $X$ is a hyper-K\"{a}hler manifold of dimension $2n$ with $H^{2n-1}(X,\mathbb{Q})\not=0$, the cup-product map
$$\bigwedge^2 H^{2n-1}(X,\mathbb{Q})\rightarrow  H^2(X,\mathbb{Q})^*.$$
is surjective.
\end{coro}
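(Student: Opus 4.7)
This corollary is a direct specialization of Proposition~\ref{prosur2n-1} to $k=n$, once we exploit the key feature distinguishing this case from general $k$: the target $\mathrm{Sym}^1\,H^2(X,\mathbb{Q})^* = H^2(X,\mathbb{Q})^*$ is already irreducible as an $O(q_X)$-representation, so the decomposition into subrepresentations $Q_X^\ell\,\mathrm{Sym}^{2n-2k+1-2\ell}H^2(X,\mathbb{Q})^{*,0}$ that was an obstruction for general $k$ degenerates to a single piece. There is therefore no room for $\mathrm{Im}\,\psi'$ to hide in a component of Hodge level exceeding $2$.

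First I would reduce to the case of a very general complex structure on the underlying smooth manifold. The cup-product map $\bigwedge^2 H^{2n-1}(X,\mathbb{Q}) \to H^{4n-2}(X,\mathbb{Q})$ composed with the Poincaré duality isomorphism $H^{4n-2}(X,\mathbb{Q}) \cong H^2(X,\mathbb{Q})^*$ is purely topological, so the rank of the resulting map to $H^2(X,\mathbb{Q})^*$ is constant along a smooth deformation of the complex structure within the hyper-Kähler deformation family of $X$. Choosing a very general member with $\rho(X) = 0$, the Mumford--Tate argument used in the proof of Proposition~\ref{prononzero} applies and shows that the Hodge structure on $H^2(X,\mathbb{Q})$ has Mumford--Tate group the full orthogonal group $O(q_X)$; in particular it is simple, and dually so is $H^2(X,\mathbb{Q})^*$.

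Once in this setting, the map $\psi'$ of Proposition~\ref{prosur2n-1} is a morphism of rational Hodge structures (being a composition of cup-product and Poincaré duality), so $\mathrm{Im}\,\psi'$ is a Hodge substructure of the simple Hodge structure $H^2(X,\mathbb{Q})^*$, hence either zero or everything. Proposition~\ref{prosur2n-1} rules out the first possibility — it asserts $\psi'$ is nondegenerate, in particular nonzero — and surjectivity follows. I do not foresee a substantive obstacle here: the technical work has already been done in Propositions~\ref{prononzero} and \ref{prosur2n-1}, and the only point one needs to verify carefully is that the second map in $\psi'$ for $k=n$ indeed identifies with the standard Poincaré duality isomorphism, which is immediate from the definition of that map as the Poincaré dual of the cup-product $\mathrm{Sym}^1 H^2 \to H^{4n-2}$.
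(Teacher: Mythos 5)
Your argument is correct and is essentially the paper's own proof: the paper likewise observes that for degree $2n-1$ the target reduces to the single piece $H^2(X,\mathbb{Q})^*$, that $\psi'$ is a nonzero morphism of Hodge structures by Proposition~\ref{prosur2n-1}, and that the target is a simple Hodge structure for a very general complex structure, forcing surjectivity. Your additional remarks on deformation invariance of the rank and on the identification of the second map with Poincar\'e duality are correct elaborations of points the paper leaves implicit.
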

\begin{proof} Indeed, the map $\psi$ is nonzero and a morphism of Hodge structures, the right hand side being a simple Hodge structure for a very general complex structure on $X$.
\end{proof}
We will also use in next section the following observation.
\begin{lemm} \label{ledecomp} Let $X$ be a hyper-K\"{a}hler $2n$-fold. Then
the Hodge structure on the quotient

\begin{eqnarray}\label{eqdecomp}H^{2n-1}(X,\mathbb{Q})^0:=H^{2n-1}(X,\mathbb{Q})/ H^2\cup H^{2n-3}(X,\mathbb{Q})
\end{eqnarray}
 has Hodge level $1$. In particular, if $H^{2n-3}(X,\mathbb{Q})=0$, the Hodge structure on $H^{2n-1}(X,\mathbb{Q})$ has Hodge level $1$.
\end{lemm}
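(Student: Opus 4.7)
The plan is to show that for every bidegree $(p,q)$ with $p+q=2n-1$ and $|p-q|\geq 3$, the Hodge component $H^{p,q}(X)$ lies in the complexification of $H^2(X,\mathbb{Q})\cdot H^{2n-3}(X,\mathbb{Q})$. Once this is granted, only the types $(n,n-1)$ and $(n-1,n)$ survive in the quotient (\ref{eqdecomp}), which is exactly the Hodge level $\leq 1$ condition; and the ``in particular'' assertion is then immediate by setting $H^{2n-3}(X,\mathbb{Q})=0$.

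The key input will be a ``$\sigma$-Lefschetz'' isomorphism. Because $\sigma\in H^0(X,\Omega^2_X)$ is an everywhere non-degenerate holomorphic $2$-form, pointwise symplectic linear algebra provides isomorphisms of holomorphic vector bundles
$$\sigma^{n-p}:\Omega^p_X\stackrel{\sim}{\longrightarrow}\Omega^{2n-p}_X,\qquad 0\leq p\leq n,$$
and taking Dolbeault cohomology, cup-product with $\sigma^{n-p}$ induces isomorphisms $H^{p,q}(X)\stackrel{\sim}{\to}H^{2n-p,q}(X)$. Granting this, for $p\geq n+1$ and $q=2n-1-p$ we may write $H^{p,q}(X)=\sigma^{p-n}\cdot H^{2n-p,q}(X)$; factoring $\sigma^{p-n}=\sigma\cdot\sigma^{p-n-1}$ and noting that $\sigma^{p-n-1}\cdot H^{2n-p,q}(X)\subset H^{p-2,q}(X)\subset H^{2n-3}(X,\mathbb{C})$, we conclude
$$H^{p,q}(X)\subset H^{2,0}(X)\cdot H^{2n-3}(X,\mathbb{C})\subset\bigl(H^2(X,\mathbb{Q})\cdot H^{2n-3}(X,\mathbb{Q})\bigr)\otimes\mathbb{C}.$$
The symmetric case $p\leq n-2$ (that is $q\geq n+1$) is handled identically by complex conjugation, using $\bar\sigma\in H^{0,2}(X)$ in place of $\sigma$.

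The only substantive step is the $\sigma$-Lefschetz isomorphism above, but I do not expect it to be a real obstacle: it holds already at the sheaf level, where it merely expresses the fact that a non-degenerate alternating form on a $2n$-dimensional vector space identifies $\Lambda^p$ with $\Lambda^{2n-p}$ via wedge with the $(n-p)$-th power of the form. No other step looks nontrivial, so the main ``difficulty'' is recognising that one should exploit the holomorphic symplectic form itself rather than a Kähler class as in the standard hard Lefschetz.
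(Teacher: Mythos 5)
Your proposal is correct and follows essentially the same route as the paper: the pointwise symplectic (hard Lefschetz) isomorphism $\sigma^{l}\wedge:\Omega_X^{n-l}\stackrel{\sim}{\to}\Omega_X^{n+l}$ at the bundle level, hence $\sigma^{l}\cup:H^{n-l,q}(X)\cong H^{n+l,q}(X)$ on Dolbeault cohomology, used to push every component with $p>n$ into $H^{2,0}\cdot H^{2n-3}(X,\mathbb{C})$, with the $q>n$ case obtained by conjugation (the paper's ``Hodge symmetry''). No gap; the argument matches the paper's proof of Lemma \ref{ledecomp}.
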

\begin{proof} The statement is that the $(p,q)$-components of $H^{2n-1}(X,\mathbb{C})^0$ vanish unless $(p,q)=(n,n-1)$ or $(p,q)=(n-1,n)$.
We thus have to show that any class in $H^{p,q}(X)$ with $p>n$ or $q>n$ belongs to $H^2(X,\mathbb{C})\cup H^{2n-3}(X,\mathbb{C})$.
 This follows  from
the fact that the cup-product map by $\sigma_X^l$ induces a vector bundle isomorphism
$$\sigma_X^l\wedge: \Omega_X^{n-l}\rightarrow \Omega_X^{n+l},$$
hence an isomorphism $\sigma_X^l\cup: H^{n-l,q}(X)\cong H^{n+l,q}(X)$. This proves the statement for $p>n$ and the other statement follows by Hodge symmetry.
\end{proof}

\section{Universality of the Kuga-Satake correspondence and applications  \label{secKS}}
We start with an effective rational Hodge   structure $(H^2,F^iH^2_\mathbb{C})$ of weight $2$ with $h^{2,0}=1$ equipped with a symmetric nondegenerate intersection pairing $(\,,\,)$ satisfying the conditions
$$ (\sigma,F^1H^2)=0,\,(\sigma,\overline{\sigma})>0,$$
where $\sigma $ generates $H^{2,0}$.
Note that $(\,,\,)$ satisfies only part of the Hodge-Riemann relations so that the Hodge structure is not in general polarized. We will call such data a Hodge structure of hyper-K\"{a}hler type (although it also encodes the quadratic form) because  this is the structure that we have on the degree $2$ cohomology
$H^2(X,\mathbb{Q})$ of a hyper-K\"{a}hler manifold equipped with   the Beauville-Bogomolov form $q_X$.
The Kuga-Satake correspondence first constructed in  \cite{Kugasatake} associates to a Hodge structure $H^2$ of hyper-K\"{a}hler type a
weight $1$ Hodge structure $H^1_{KS}$, which has the property
that there is an injective morphism
of Hodge structures
$$ H^2\rightarrow {\rm End}  \,H^1_{KS},$$
of bidegree $(-1,-1)$. Note that the Hodge structure on both sides  has Hodge level $2$.
When the Hodge structure  hyper-K\"{a}hler type on $H^2$ is polarized by $(\,,\,)$, which means that our data have  has the extra property that
the pairing $(\,,\,)$ restricted to $H^{1,1}_\mathbb{R}$  is negative definite, the Hodge structure on $H^1_{KS}$ is polarized, hence is the Hodge structure on the $H^1$ of an abelian variety.

The construction of $H^1_{KS}$ can be summarized as follows:
The $\mathbb{Q}$-vector space $H^1_{KS}$ is defined as ${\rm Cliff}\,(H^2,(\,,\,))$, that is, it is the quotient of the tensor algebra
$\otimes H^2$ by the ideal generated by the relations $x^2=(x,x)1$, $x\in H^2$.
The weight $1$ Hodge structure on $ H^1_{KS}$ is given by a complex structure on the real vector space $ H^1_{KS,\mathbb{R}}$. It is constructed as follows. Consider the subspace $(H^{2,0}\oplus H^{0,2})_{\mathbb{R}}\subset H^2_\mathbb{R}$. It is of dimension $2$, naturally oriented, and the restriction of the  form
$(\,,\,)$ to this real plane  is positive definite. Choose a positively oriented orthonormal basis $(e_1,\,e_2)$ of $(H^{2,0}\oplus H^{0,2})_{\mathbb{R}}$. Then $e:=e_1e_2\in {\rm Cliff}\,(H^2_\mathbb{R},(\,,\,))$ does not depend on the choice of basis and satisfies $e^2=-1$. Left Clifford multiplication by $e$ thus defines the desired complex structure on $ {\rm Cliff}\,(H^2_\mathbb{R},(\,,\,))= H^1_{KS,\mathbb{R}}$.

Clifford multiplication on the left induces a morphism
$$ H^2\rightarrow {\rm End}\,H^1_{KS}$$
which is a morphism of Hodge structures of bidegree $(-1,-1)$. This is equivalent to saying that Clifford multiplication on the left by
$H^{1,1}$ preserves the Hodge decompostion of $H^{1}_{KS,\mathbb{C}}$ and that Clifford multiplication on the left by
$H^{1,1}$ shifts the Hodge decomposition of $H^{1}_{KS,\mathbb{C}}$  by $(-1,-1)$. The first fact follows because $H^{1,1}$ is orthogonal to $H^{2,0}$ for $(\,,\,)$, so multiplication by elements of $H^{1,1}$ anticommutes with Clifford  multiplication by elements of $H^{2,0}$ or $H^{0,2}$, hence commutes with Clifford multiplication by $e_1e_2$.  The second fact is an easy computation.

The weight $1$ Hodge structure $H^1_{KS}$ is not simple. In fact it has a big algebra of endomorphisms  given by right Clifford multiplication on the Clifford algebra. These endomorphisms  obviously commute with left Clifford multiplication by $e$, hence provide automorphisms of Hodge structure of $H^1_{KS}$.
To start with, we can restrict the construction to the even Clifford algebra $C^+(H^2, (\,,\,))$ generated by the tensor products
$v_1\otimes\ldots\otimes v_k$, $v_i\in H^2$, with $k$ even,  which  clearly provides  a Hodge substructure of $H^{1}_{KS,\mathbb{Q}}$ since multiplication on the left by $e$ preserves $C^+(H^2_\mathbb{R}, (\,,\,))$. We can do similarly with the odd part $C^-(H^2, (\,,\,))$ of the Clifford algebra, which provides another Hodge substructure.   Multiplication on the right by a given  element $v_0\in H^2$ with $(v,v)\not=0$ provides an isomorphism
$$ C^+(H^2, (\,,\,))\cong C^-(H^2, (\,,\,)),$$
so that, denoting $H^1_{KS+}$, $H^1_{KS-}$ the weight $1$ Hodge structures so obtained,
we have an isomorphism
$$H^1_{KS+}\rightarrow H^1_{KS-}$$
given by right Clifford multiplication by $v_0$, and we get
 an injective  (but not canonical) morphism of Hodge structures
$$ H^2\rightarrow {\rm End}\,H^1_{KS+}$$
given by
$$v\mapsto (\alpha\mapsto v\alpha v_0).$$

Using representation theory of the orthogonal group,  one can  describe up to isogeny and  when the Mumford-Tate group of the Hodge structure on $H^2$ is the  orthogonal group $O((\,,\,))$, the complex tori appearing as subquotient of the  Kuga-Satake complex torus (see \cite{charles}, \cite{geemen}). Note that in the geometric case, it follows from the local  surjectivity of the period map that the Mumford-Tate group is the  orthogonal group $O((\,,\,))$. When $h$ is odd, the Kuga-Satake complex torus  is a power of a simple torus of    dimension $2^{ \frac{h-3}{2}}$ or $2^{\frac{h-1}{2}}$. When    $h$ is even, the situation is much more delicate, as the classification of the subquotients depends on the discriminant of the quadratic form $(\,,\,)$. In this case, the Kuga-Satake complex torus is a sum of powers of one or two  simple complex tori which can be of dimension $2^{h/2},\,2^{h/2-1}$ or $2^{h/2-2}$.
The numbers above  are obtained starting from the fact that
the even Clifford algebra $C^+(H^2)$ has dimension $2^{h-1}$ and that the action on it  by right multiplication by elements of $C^+(H^2)$ (which are morphisms of Hodge structures since they commute with the left multiplication by $e$)  splits  it as a direct sum of weight $1$  Hodge structures.
We now   consider  the polarized case.  
The  Kuga-Satake Hodge structure  $H^1_{KS+}$ is then  polarized and thus is the weight $1$ Hodge structure on the degree $1$ rational cohomology of an abelian variety, that we will denote $KS(H^2,(\,,\,))$, and is defined up to isogeny. The two dual complex tori appearing above are then isomorphic. In the case where $h$ is even, the simple abelian variety one gets has a quadratic endomorphism which makes it a Weil abelian variety.
In the geometric case, where we start from the degree $2$ cohomology of a hyper-K\"{a}hler manifold $X$, equipped with the Beauville-Bogomolov form $ (\,,\,)_{BB}$, we assume $X$ is polarized by an ample class $l\in{\rm NS}(X)$ and put $$(H^2,(\,,\,))=(H^2(X,\mathbb{Q})^{\perp l},q_X)$$
$$KS(X)=KS(H^2,(\,,\,)).$$

The following universality property is proved in \cite{charles}  (see also  \cite{vgeemen-voisin} for a  slightly different   statement, proved only in the polarized case).
\begin{theo} \label{theouniv} Let $(H^2,(\,,\,))$ be a  Hodge structure of hyper-K\"{a}hler type. Assume the Mumford-Tate group of $H^2$ is $SO(H^2,(\,,\,))$.  Let $H$ be a simple  effective weight $1$ Hodge structure, such that for nonnegative integers $a,\,b$ of the same parity, there exists an injective morphism of   Hodge structures of bidegree $(\frac{a-b}{2}-1, \frac{a-b}{2}-1)$
$$  H^2\hookrightarrow H^{\otimes a}\otimes ({H^\vee})^{\otimes b}.$$
Then $H$ is a subquotient  of the Kuga-Satake Hodge structure $H^1_{KS+}$. In particular
$${\rm dim}\,H\geq 2^k,\,\,{\rm  where }\,\,k= \frac{h-1}{2}\,\,{\rm if} \,\,h\,\,{\rm  is\,\, odd},  \,\,k= \frac{h-2}{2}\,\,{\rm if}\,\, h\,\,{\rm  is \,\,even}.$$
\end{theo}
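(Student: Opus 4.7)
The plan is to pass from Hodge theory to representation theory of the spin group and back. Let $G_H\subset GL(H_{\mathbb{Q}})$ denote the identity component of the Mumford-Tate group of the simple weight-one Hodge structure $H$; it is a connected reductive group for which $H$ is a faithful rational representation. A Tate twist of the given embedding of Hodge structures $H^2 \hookrightarrow H^{\otimes a} \otimes (H^{\vee})^{\otimes b}$ is $G_H$-equivariant, by functoriality of Mumford-Tate groups, so $G_H$ acts on $H^2$. Since the form $(\,,\,)$ is a Hodge class, this action factors through a homomorphism $\rho\colon G_H \to O(H^2,(\,,\,))$. The assumption that the Mumford-Tate group of $H^2$ equals $SO(H^2,(\,,\,))$ then forces $\rho(G_H)=SO(H^2)$ (by connectedness of $G_H$).

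Next one lifts $\rho$ through the spin cover. Replacing $G_H$ by the simply connected cover of its derived group, one obtains a surjection onto $\mathrm{Spin}(H^2,(\,,\,))$ through which $H$ becomes a rational $\mathrm{Spin}$-representation. The weight-one Hodge structure on $H$ is encoded by a Deligne morphism $h\colon \mathbb{S}\to G_{H,\mathbb{R}}$, and $\rho\circ h$ is the Deligne morphism giving the weight-two Hodge structure of $H^2$. A direct computation in the Clifford algebra shows that the natural lift of $\rho\circ h$ to $\mathrm{Spin}(H^2,\mathbb{R})$ is precisely the one-parameter family containing the element $e=e_1e_2\in C^+(H^2_{\mathbb{R}})$ used in Section~\ref{secKS} to define the complex structure on $H^1_{KS+}$. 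Equivalently, the Hodge structure inherited by any $\mathrm{Spin}$-summand of $H$ coincides with the Kuga-Satake Hodge structure on the corresponding summand of the Clifford algebra.

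One then invokes the classical representation theory of $\mathrm{Spin}(h)$, where $h=\dim H^2$: an irreducible rational representation either factors through $SO(h)$, in which case it already appears inside $C^+(H^2)$ as a piece of $SO$-type, or it is a (half-)spin representation of dimension $2^{(h-1)/2}$ (for $h$ odd) or $2^{h/2-1}$ (for $h$ even), and these are precisely the remaining simple summands of the even Clifford algebra $C^+(H^2)=H^1_{KS+}$ viewed as a left $\mathrm{Spin}$-module under right multiplication. Simplicity of $H$ as a Hodge structure, combined with the Hodge-theoretic matching of the previous step, yields that $H$ appears as a subquotient of $H^1_{KS+}$, from which the stated dimension bound follows.

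The main obstacle is the comparison between the two Hodge-theoretic structures. The surjection $\rho\colon G_H\twoheadrightarrow SO(H^2)$ is quick, and the representation-theoretic bookkeeping is classical; what requires work is verifying that the lift of the Deligne morphism to $\mathrm{Spin}(H^2)$ induced by the Hodge datum on $H$ coincides, up to isogeny, with the Kuga-Satake lift, so that $H$ does sit inside $H^1_{KS+}$ and not merely inside some abstract $\mathrm{Spin}$-module sharing its representation-theoretic profile. This compatibility is essentially the content of Charles's argument in \cite{charles}, and it is precisely where the bidegree condition on the embedding (the parity of $a,b$ and the shift by $(\frac{a-b}{2}-1,\frac{a-b}{2}-1)$) enters.
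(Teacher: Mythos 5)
A preliminary remark: the paper does not prove Theorem \ref{theouniv} at all; it is quoted from Charles \cite{charles} (with \cite{vgeemen-voisin} for a polarized variant), so there is no internal proof to compare with, and your text has to stand on its own. As it stands it has a genuine gap at its central step. The sentence ``replacing $G_H$ by the simply connected cover of its derived group, one obtains a surjection onto $\mathrm{Spin}(H^2,(\,,\,))$ through which $H$ becomes a rational $\mathrm{Spin}$-representation'' is unjustified: pulling back to the simply connected cover does give a surjection onto $\mathrm{Spin}(H^2)$, but it does not make $H$ a $\mathrm{Spin}$-representation. A priori $G_H$ is an almost-direct product of a central torus and several simple factors, only one of which covers $SO(H^2)$, and the kernel of $\rho$ could act nontrivially on $H$, so that the irreducible constituents of $H_{\mathbb{C}}$ could be tensor products of a (half-)spin representation with nontrivial representations of the remaining factors; such an $H$ would not be a subquotient of $H^1_{KS+}$. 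Showing that this cannot happen---i.e.\ that the Mumford--Tate group of $H$ is essentially a quotient of $\mathrm{GSpin}(H^2)$ and that the constituents are exactly the (half-)spin representations---is the heart of the theorem, and it must use the hypotheses you never invoke at this point: that $H$ is effective of weight $1$, simple, and that $a,b$ have the same parity. Moreover, the remaining decisive step, the identification of the Hodge structure induced on the spin constituents with the Kuga--Satake Hodge structure of Section \ref{secKS}, is not carried out but explicitly deferred to ``Charles's argument in \cite{charles}''---which is the very result being proved, so the proposal is an outline rather than a proof.

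A second, related error is the dichotomy ``an irreducible rational representation either factors through $SO(h)$, in which case it already appears inside $C^+(H^2)$ as a piece of $SO$-type, or it is a (half-)spin representation.'' For the module structure relevant to the Kuga--Satake construction (left Clifford multiplication, with the commuting right multiplications cutting out the weight-$1$ summands), $C^+(H^2)$ decomposes into copies of the (half-)spin representations only; irreducible representations factoring through $SO(H^2)$ do not occur as weight-$1$ subquotients of $H^1_{KS+}$. So in the first branch of your dichotomy the desired conclusion would be false rather than automatic: that case has to be excluded, for instance by observing that on an effective simple weight-$1$ structure the central $\mu_2=\ker(\mathrm{Spin}\rightarrow SO)$ must act nontrivially, which again requires the weight and effectivity hypotheses that your argument does not use. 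Until these two points are supplied, the proposal establishes neither that $H$ sits in $H^1_{KS+}$ nor the dimension bound.
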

If $h$ is  divisible by $4$, the last  inequality can be improved to ${\rm dim}\,H\geq 2^{\frac{h}{2}}$. 

 A first application of this universality property  (or rather a variant of it)  was given in \cite{vgeemen-voisin} where we proved the Matsushita conjecture on the moduli map for Lagrangian fibration of projective hyper-K\"{a}hler manifolds, at least in the case where $b_2(X)\geq 5$, assuming the Mumford-Tate group is maximal. A second application (also in the projective case, with $a=b=1$) was given  by O'Grady in \cite{ogrady}. Let $X$ be a projective hyper-K\"{a}hler  manifold of generalized Kummer deformation type and dimension  $\geq 4$. One has $b_2(X)=7$, hence for a very general projective such hyper-K\"{a}hler manifold, $b_2(X)_{tr}=6$, so that $KS(X)$ is isogenous to a sum of copies of a simple  abelian fourfold  of Weil type. Using Theorem \ref{theosurj3} (that he had  proved by  an explicit computation  in that case), the fact that $b_3(X)=8$, and the universality property of Theorem \ref{theouniv},
O'Grady proved the following result.
\begin{theo}  \label{theoogrady} The intermediate Jacobian $J^3(X)$ of a projective hyper-K\"{a}hler  manifold of generalized Kummer deformation type with $\rho(X)=1$  is a Weil abelian fourfold. The Kuga-Satake variety of $(H^2(X,\mathbb{Q})_{tr},q_X)$ is isogenous to a  sum of two  copies of $J^3(X)$.
\end{theo}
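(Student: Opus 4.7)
The plan is to combine the O'Grady surjection of Theorem \ref{theosurj3} with the Kuga--Satake universality of Theorem \ref{theouniv}, using the numerical data $b_3(X)=8$ and $b_{2,\mathrm{tr}}=6$ specific to the generalized Kummer case with $\rho(X)=1$. For $X$ of generalized Kummer type one has $H^{3,0}(X)=0$, so the weight 3 Hodge structure on $H^3(X,\mathbb{Q})$ has Hodge level 1 and, after a Tate twist, identifies with the $H^1$ of the intermediate Jacobian $J^3(X)$, which is an abelian fourfold by projectivity of $X$. Write $H$ for the associated effective weight 1 Hodge structure, of dimension $8$.

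The O'Grady surjection $\bigwedge^2 H^3(X,\mathbb{Q})\twoheadrightarrow H^{4n-2}(X,\mathbb{Q})$, composed with the isomorphism $H^{4n-2}(X,\mathbb{Q})\cong H^2(X,\mathbb{Q})(-2n+2)$ given by Poincar\'e duality and the Beauville--Bogomolov form, dualized, and restricted to the transcendental part (which has dimension $6$ since $\rho(X)=1$), produces an injection of Hodge structures
$$H^2(X,\mathbb{Q})_{\mathrm{tr}}\hookrightarrow (\bigwedge^2 H)(4-2n).$$
Iterating the polarization of $J^3(X)$ gives a natural inclusion $\mathbb{Q}(4-2n)\hookrightarrow H^{\otimes(4n-8)}$; composing with this and using $\bigwedge^2 H\subset H^{\otimes 2}$ turns the previous injection into
$$H^2(X,\mathbb{Q})_{\mathrm{tr}}\hookrightarrow H^{\otimes(4n-6)},$$
whose bidegree $(2n-4,2n-4)$ matches the required $(\tfrac{a-b}{2}-1,\tfrac{a-b}{2}-1)$ of Theorem \ref{theouniv} with $a=4n-6$, $b=0$.

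Theorem \ref{theouniv} then implies that any simple summand $H'$ of $H$ is isogenous to a subquotient of the Kuga--Satake Hodge structure $H^1_{KS+}$ associated to $(H^2(X,\mathbb{Q})_{\mathrm{tr}},q_X)$. The essential external input, recorded in Section \ref{secKS} and established by analysis of the even Clifford algebra $C^+(H^2_{\mathrm{tr}},q_X)$ for the specific signature and discriminant arising in the Kummer case, is that the Kuga--Satake variety is isogenous to a power of a simple Weil abelian fourfold, so its simple factors have weight 1 Hodge structures of dimension $8$. Since $\dim H=8$, this forces $H'=H$ and $H$ simple, yielding $J^3(X)$ itself isogenous to a simple Weil abelian fourfold and proving (1). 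For (2), the same Clifford-algebra calculation identifies $KS(X)$ as an abelian variety of dimension $8$, whence $KS(X)\sim A^2\sim J^3(X)^2$ for the Weil fourfold $A$ of (1).

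The main obstacle is the Clifford-algebra input pinning down the simple factors of $KS$ at the full-spin dimension $2^{h/2}=8$ rather than the smaller half-spin dimensions $2^{h/2-1}=4$ or $2^{h/2-2}=2$ allowed a priori by the coarse bound $\dim H'\geq 2^{(h-2)/2}=4$ of Theorem \ref{theouniv}; this requires a careful signature and discriminant analysis of $(H^2_{\mathrm{tr}},q_X)$ in the Kummer setting. A secondary technical point is extracting a single simple summand $H'$ of $H$ cleanly from the injection into $H^{\otimes(4n-6)}$, which is done via the very general Mumford--Tate hypothesis underlying Theorem \ref{theouniv}.
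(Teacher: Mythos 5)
The paper does not actually prove this statement: Theorem \ref{theoogrady} is quoted from O'Grady, with the proof indicated in the sentence preceding it --- surjectivity of the O'Grady map (Theorem \ref{theosurj3}), the fact that $b_3(X)=8$, and the universality property of Theorem \ref{theouniv} applied with $a=b=1$ (i.e.\ to the induced embedding $H^2(X,\mathbb{Q})_{\rm tr}\hookrightarrow {\rm End}\,H^1(J^3(X),\mathbb{Q})$), combined with the representation-theoretic description of the Kuga--Satake torus for $b_{2,tr}=6$ with maximal Mumford--Tate group recalled in Section \ref{secKS}. Your outline reconstructs essentially this argument for the first assertion; your embedding with $a=4n-6$, $b=0$, obtained by tensoring with powers of the polarization class, is correct but needlessly roundabout compared with the intended $a=b=1$ application, and you should also note that you need Mumford--Tate maximality of $H^2_{\rm tr}$ for the given $X$ with $\rho(X)=1$, not merely for a very general deformation (this is part of what is being imported from O'Grady). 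The reduction to a simple summand of $H^1(J^3(X))$, which you flag, is the same looseness as in the paper's use of Theorem \ref{theouniv} elsewhere (Theorem \ref{theob3}), so I do not count it against you.

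The genuine gap is in your last step, which is supposed to give the second sentence of the theorem. You assert that ``the same Clifford-algebra calculation identifies $KS(X)$ as an abelian variety of dimension $8$, whence $KS(X)\sim A^2$.'' This does not follow from anything you set up, and it is inconsistent with the definition recalled in Section \ref{secKS}: there $H^1_{KS+}=C^+(H^2_{\rm tr},q_X)$ has rational dimension $2^{h-1}=32$ for $h=b_{2,tr}=6$, so the Kuga--Satake abelian variety in that normalization has dimension $16$, and what the Clifford-module decomposition gives is that it is isogenous to a \emph{power} of the simple Weil fourfold, the multiplicity being determined by the structure of $C^+$ as a module over itself (for $h=6$, $C^+\otimes\overline{\mathbb{Q}}\cong M_4\times M_4$, and the answer depends on the discriminant of $q_X$ on $H^2_{\rm tr}$); the multiplicity ``two'' in the statement reflects the normalization of the Kuga--Satake variety used by O'Grady, not a dimension-$8$ count. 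As written, your argument establishes that $J^3(X)$ is isogenous to the simple Weil fourfold underlying $KS(X)$, but not the precise isogeny type $KS(X)\sim J^3(X)^2$. Relatedly, your phrase ``full-spin dimension $2^{h/2}=8$'' conflates torus dimension with $H^1$-dimension: in the paper's bookkeeping the relevant simple torus has complex dimension $2^{h/2-1}=4$ (a fourfold, whose $H^1$ has dimension $8$), and since the whole numerical coincidence with $b_3=8$ hinges on this count, it should be stated correctly.
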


\subsection{Applications to Betti numbers}
In this section, we are going to apply the previous results to get inequalities involving the  Betti numbers of hyper-K\"{a}hler manifolds.
\begin{theo}\label{theob3} Let $X$ be a hyper-K\"{a}hler manifold. Assume that $b_3(X)\not=0$. Then
\begin{eqnarray}\label{eqineq3} b_3(X)\geq 2^k,
\end{eqnarray}
where    $k= \frac{b_2(X)-1}{2}$ if $b_2(X)$ is odd, $k= \frac{b_2(X)-2}{2}$ if $b_2(X)$ is even.

If $b_2(X)$ is divisible by $4$, the last inequality can be improved to $b_3(X)\geq 2^{b_2(X)/2}$.
\end{theo}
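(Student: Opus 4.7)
The plan is to combine the surjectivity of the O'Grady map (Theorem~\ref{theosurj3}) with the universality of the Kuga-Satake construction (Theorem~\ref{theouniv}). Since $b_2(X)$ and $b_3(X)$ depend only on the underlying topology of $X$, I will first deform $X$ within its deformation class to a very general complex structure, so that $\rho(X)=0$ and the Mumford-Tate group of $H^2(X,\mathbb{Q})$ equals $SO(q_X)$; in particular $H^2(X,\mathbb{Q})$ is a simple Hodge structure of hyper-K\"ahler type.

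The first step is to convert the O'Grady surjection into an embedding of weight $1$ Hodge structures. Poincar\'e duality combined with the Beauville-Bogomolov form gives an isomorphism $H^{4n-2}(X,\mathbb{Q})\cong H^2(X,\mathbb{Q})(2-2n)$ of Hodge structures, so Theorem~\ref{theosurj3} together with semisimplicity of the category of polarizable rational Hodge structures yields an injection $H^2(X,\mathbb{Q})(-2)\hookrightarrow\bigwedge^2 H^3(X,\mathbb{Q})$. Since $H^\bullet(X,\mathcal{O}_X)\cong\mathbb{C}[\sigma_X]$ is concentrated in even degrees, $h^{3,0}(X)=0$ and $H^3(X,\mathbb{Q})$ has Hodge level $1$; hence $H:=H^3(X,\mathbb{Q})(1)$ is an effective weight $1$ Hodge structure. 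After a Tate twist the above injection becomes the weight $2$ morphism
\[ H^2(X,\mathbb{Q})\hookrightarrow \bigwedge^2 H\subset H\otimes H. \]

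To conclude, I will decompose $H$ as a direct sum $H=\bigoplus_i V_i$ of isotypic components with respect to its simple weight $1$ summands. By simplicity of $H^2(X,\mathbb{Q})$ its image in $\bigwedge^2 H=\bigoplus_i \bigwedge^2 V_i\oplus\bigoplus_{i<j} V_i\otimes V_j$ lies in a single summand, and in the principal case this produces an embedding $H^2(X,\mathbb{Q})\hookrightarrow H_0^{\otimes 2}$ for a simple factor $H_0\subset H$. Applying Theorem~\ref{theouniv} with this $H_0$, $a=2$, $b=0$ (the required bidegree $(a-b)/2-1=0$ matches weight $2$ on both sides) identifies $H_0$ with a subquotient of the Kuga-Satake Hodge structure $H^1_{KS+}$, hence $\dim H_0\geq 2^k$ with the prescribed $k$, and $b_3(X)=\dim H\geq\dim H_0\geq 2^k$ follows. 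The improvement to $2^{b_2(X)/2}$ when $4\mid b_2(X)$ is inherited from the corresponding improvement in Theorem~\ref{theouniv}.

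The step I expect to be the main obstacle is the extraction of the simple summand $H_0$: a priori the image of $H^2(X,\mathbb{Q})$ in $\bigwedge^2 H$ could lie in a cross term $V_i\otimes V_j$ with $V_i$ and $V_j$ non-isogenous, in which case no embedding into a square $H_0^{\otimes 2}$ is immediately visible. Handling this situation---either ruling it out, or upgrading to an embedding of the shape $H^2(X,\mathbb{Q})\hookrightarrow H_0^{\otimes a}\otimes (H_0^\vee)^{\otimes b}$ by using that the Mumford-Tate group of $H^2$ is the full $SO(q_X)$---is the main technical point, and is where the flexibility in $(a,b)$ and in allowing duals $H^\vee$ in Theorem~\ref{theouniv} will be crucially used.
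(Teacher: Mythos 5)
Your strategy is the same as the paper's: use Theorem \ref{theosurj3}, pass to a very general complex structure so that the Mumford--Tate group of $H^2(X,\mathbb{Q})$ is maximal, turn the surjection into an embedding of $H^2$ into a tensor square of a level-one Hodge structure built from $H^3$, and invoke the universality Theorem \ref{theouniv} to get the bound $2^k$ (with the improvement for $4\mid b_2$ inherited from that theorem). Two points where you deviate deserve comment. First, you produce the embedding by \emph{splitting} the surjection $\bigwedge^2H^3\to H^2{}^*$ using ``semisimplicity of the category of polarizable rational Hodge structures''. At the very general point $X$ is not projective ($\rho=0$), the Beauville--Bogomolov form is not definite on $H^{1,1}_{\mathbb{R}}$, and polarizability of $H^3(X,\mathbb{Q})$ is not available, so this appeal is unjustified as stated. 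It is also unnecessary: as in the paper, simply dualize the surjection to get an injection $H^2(X,\mathbb{Q})\hookrightarrow\bigwedge^2H^3(X,\mathbb{Q})^*\subset H^3(X,\mathbb{Q})^*\otimes H^3(X,\mathbb{Q})^*$, which requires no semisimplicity, and feed this (after the appropriate Tate twist, using the $(a,b)$ flexibility with duals) into Theorem \ref{theouniv}.

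Second, and more seriously, your proof stops exactly at the step you yourself flag: the extraction of a simple summand $H_0$ with $H^2\hookrightarrow H_0^{\otimes 2}$. The cross-term case, where the image of the simple Hodge structure $H^2$ lies in $V_i\otimes V_j$ with $V_i$, $V_j$ non-isogenous simple factors of $H^3$, is not handled, and it cannot be waved away: an embedding $H^2\hookrightarrow V_i\otimes V_j$ a priori only constrains the product $\dim V_i\cdot\dim V_j$, whereas the theorem needs a lower bound on $\dim V_i+\dim V_j\leq b_3(X)$, so one genuinely needs a form of the universality statement that applies to a pair of distinct simple weight-one structures (this is what Charles' results in \cite{charles} provide, and is implicitly how the paper uses Theorem \ref{theouniv}: it applies the universality directly to the embedding into $H^3(X,\mathbb{Q})^*\otimes H^3(X,\mathbb{Q})^*$ without any isotypic decomposition, concluding that a simple Kuga--Satake subquotient, of dimension at least $2^k$, occurs in $H^3$). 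So your outline is faithful to the paper's route, but as written it is incomplete at this point, and the sentence announcing that the flexibility in $(a,b)$ ``will be crucially used'' is not a proof of that step.
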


\begin{proof} By Theorem \ref{theosurj3}, we have a surjective morphism of Hodge structures
$$\bigwedge^2H^3(X,\mathbb{Q})\rightarrow H^{4n-2}(X,\mathbb{Q})\cong H^2(X,\mathbb{Q})^*,$$
which gives as well an injective morphism of Hodge structures

$$ H^2(X,\mathbb{Q})\hookrightarrow  \bigwedge ^2H^3(X,\mathbb{Q})^*\hookrightarrow H^3(X,\mathbb{Q})^*\otimes H^3(X,\mathbb{Q})^*.$$

Choosing the complex  structure on $X$  very general so that the Mumford-Tate group of the Hodge structure on $H^2(X,\mathbb{Q})$ is the orthogonal group of $(\,,\,)$, we can thus apply Theorem \ref{theouniv}, which gives (\ref{eqineq3}).
\end{proof}
We now turn to the Betti number $b_{2n-1}$. We prove the following
\begin{theo}\label{theob2n-1} Let $X$ be a hyper-K\"{a}hler manifold such that
$ H^{2n-3}(X)=0$ and $ H^{2n-1}(X)\not=0$. Then
\begin{eqnarray}\label{eqineq2n-1} b_{2n-1}(X)\geq 2^k,
\end{eqnarray}
where   $k= \frac{b_2(X)-1}{2}$ if $b_2(X)$ is odd,  $k= \frac{b_2(X)-2}{2}$ if $b_2(X)$ is even.

If $b_2(X)$ is divisible by $4$, the last inequality can be improved to $b_{2n-1}(X)\geq 2^{b_2(X)/2}$.
\end{theo}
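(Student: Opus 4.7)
The strategy is to transpose the proof of Theorem \ref{theob3} to degree $2n-1$; the only extra ingredient needed is Lemma \ref{ledecomp}, which under the hypothesis $H^{2n-3}(X,\mathbb{Q})=0$ ensures that the weight $2n-1$ Hodge structure on $H^{2n-1}(X,\mathbb{Q})$ has Hodge level $1$. Equivalently, its Tate twist $V:=H^{2n-1}(X,\mathbb{Q})(n-1)$ is an effective weight $1$ Hodge structure, putting it on the same footing as $H^3(X,\mathbb{Q})(1)$ in the degree $3$ argument.

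Starting from Corollary \ref{corsur2n-1}, one has a surjective morphism of Hodge structures $\bigwedge^2 H^{2n-1}(X,\mathbb{Q})\twoheadrightarrow H^2(X,\mathbb{Q})^{*}$. Dualizing yields an injective morphism of Hodge structures
$$H^2(X,\mathbb{Q})\hookrightarrow \bigwedge^2 H^{2n-1}(X,\mathbb{Q})^{*}\hookrightarrow H^{2n-1}(X,\mathbb{Q})^{*}\otimes H^{2n-1}(X,\mathbb{Q})^{*}.$$
After absorbing the Tate twists that come from Poincar\'e duality, this amounts to an injective morphism of Hodge structures $H^2(X,\mathbb{Q})\hookrightarrow V^{\vee}\otimes V^{\vee}$ of bidegree $(-2,-2)$, which fits exactly the input $(a,b)=(0,2)$ of Theorem \ref{theouniv}, as $(a-b)/2-1=-2$.

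Choosing the complex structure on $X$ to be very general so that the Mumford--Tate group of $H^2(X,\mathbb{Q})$ is the full orthogonal group of $(\,,\,)$, one decomposes $V$ into simple weight $1$ summands and projects the above embedding onto a simple factor $H$ on which the projection remains injective (this is possible because $H^2(X,\mathbb{Q})$ is itself simple under our assumption). Theorem \ref{theouniv} then yields $\dim_{\mathbb{Q}} H\geq 2^k$, whence $b_{2n-1}(X)=\dim_{\mathbb{Q}} V\geq \dim_{\mathbb{Q}} H\geq 2^k$. The improvement when $4\mid b_2(X)$ is a direct consequence of the parallel refinement recorded in Theorem \ref{theouniv}.

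The main obstacle to the argument is precisely the Hodge level $1$ property for $H^{2n-1}(X,\mathbb{Q})$, which is the reason for imposing $H^{2n-3}(X,\mathbb{Q})=0$; Lemma \ref{ledecomp} handles exactly this point. A secondary bookkeeping issue, already implicit in the proof of Theorem \ref{theob3}, is the extraction of a simple summand $H$ of $V$ from the embedding into $V^{\vee}\otimes V^{\vee}$ in a form compatible with the statement of Theorem \ref{theouniv}; this reduction is routine given the maximal Mumford--Tate assumption on $H^2(X,\mathbb{Q})$.
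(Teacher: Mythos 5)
Your proposal is correct and follows essentially the same route as the paper: Corollary \ref{corsur2n-1} gives the surjection onto $H^2(X,\mathbb{Q})^*$, Lemma \ref{ledecomp} (using $H^{2n-3}(X,\mathbb{Q})=0$) gives the Hodge level $1$ property, and dualizing to embed $H^2(X,\mathbb{Q})$ into $H^{2n-1}(X,\mathbb{Q})^{*}\otimes H^{2n-1}(X,\mathbb{Q})^{*}$ puts you exactly in the situation of Theorem \ref{theob3}, so Theorem \ref{theouniv} applies. Your extra remarks on Tate twists and on extracting a simple summand are just the bookkeeping already implicit in the paper's argument.
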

\begin{proof}  By Corollary \ref{corsur2n-1}, the cup-product map
$$\bigwedge^2 H^{2n-1}(X,\mathbb{Q})\rightarrow H^{4n-2}(X,\mathbb{Q})$$
is surjective.
As we assumed $H^{2n-3}(X,\mathbb{Q})=0$, the Hodge structure on  $H^{2n-1}(X,\mathbb{Q})$ has Hodge level $1$ by Lemma
\ref{ledecomp}.
We are thus exactly as in the  situation of Theorem \ref{theob3}  and the same  arguments give inequality (\ref{eqineq2n-1}).
\end{proof}
In the case of a hyper-K\"{a}hler manifold $X$ of dimension $2n=6$, we get
\begin{coro} Let $X$ be a hyper-K\"{a}hler $6$-fold such that
$ H^{{\rm odd}}(X)\not=0$. Then
\begin{eqnarray}\label{eqineqodd} b_{odd}(X)\geq 2^k,
\end{eqnarray}
where  $k= \frac{b_2-2}{2}$ if $b_2$ is even,  $k= \frac{b_2-1}{2}$ if $b_2$ is odd.
\end{coro}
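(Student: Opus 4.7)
The plan is to dispatch the corollary as an immediate case analysis using the two Betti-number theorems already proved, namely Theorem \ref{theob3} for $b_3$ and Theorem \ref{theob2n-1} for $b_{2n-1}$ specialized to $n=3$.

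First I would observe that for a hyper-K\"ahler $6$-fold the odd cohomology splits, via Poincar\'e duality and the vanishing $b_1=0$, as contributions coming from $H^3$ and $H^5$. In particular, the hypothesis $H^{\rm odd}(X)\neq 0$ forces at least one of $H^3(X,\mathbb{Q})$ or $H^5(X,\mathbb{Q})$ to be nonzero, and in all cases $b_{\rm odd}(X)\geq \max(b_3(X),b_5(X))$.

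The two cases are then handled separately. If $H^3(X,\mathbb{Q})\neq 0$, then Theorem \ref{theob3} directly gives $b_3(X)\geq 2^k$ with the same $k$ as in the statement of the corollary, and hence $b_{\rm odd}(X)\geq 2^k$. If instead $H^3(X,\mathbb{Q})=0$, then we must have $H^5(X,\mathbb{Q})\neq 0$. Now the hypotheses of Theorem \ref{theob2n-1} are met for $n=3$, namely $H^{2n-3}(X,\mathbb{Q})=H^3(X,\mathbb{Q})=0$ and $H^{2n-1}(X,\mathbb{Q})=H^5(X,\mathbb{Q})\neq 0$, so the theorem yields $b_5(X)\geq 2^k$ with the same $k$, and again $b_{\rm odd}(X)\geq 2^k$.

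There is no substantive obstacle: the $6$-dimensional case is precisely the smallest dimension where the trichotomy collapses, in the sense that the only possible nonzero odd-degree primitive cohomologies are in degrees $3$ and $5$, and these are exactly the two degrees covered by Theorems \ref{theob3} and \ref{theob2n-1}. The sharpened inequality $b_{\rm odd}(X)\geq 2^{b_2(X)/2}$ when $4\mid b_2(X)$ can be appended along the same lines, invoking the improved bounds already stated in Theorems \ref{theob3} and \ref{theob2n-1}.
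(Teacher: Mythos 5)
Your proof is correct and follows essentially the same route as the paper: the case split into $H^3(X,\mathbb{Q})\neq 0$ (apply Theorem \ref{theob3}) versus $H^3(X,\mathbb{Q})=0$ and $H^5(X,\mathbb{Q})\neq 0$ (apply Theorem \ref{theob2n-1} with $n=3$) is exactly the argument given there. Your explicit remark that $b_1=0$ and Poincar\'e duality reduce the odd cohomology to degrees $3$ and $5$ only spells out what the paper leaves implicit.
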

\begin{proof} We observe that, in dimension $6$, if $H^{\rm odd}(X)\not=0$, then either $H^3(X,\mathbb{Q})\not=0$ or, $H^3(X,\mathbb{Q})=0$ and $H^5(X,\mathbb{Q})\not=0$. In the first case we apply  Theorem \ref{theob3} and in the second case we apply Theorem  \ref{theob2n-1}.
\end{proof}
\section{Algebraicity of the Kuga-Satake correspondence}
Let $X$ be a projective complex manifold. Assume that $h^{2,0}(X)=1$, so that the Hodge structure on $H^2(X,\mathbb{Q})$ is of hyper-K\"{a}hler type (choosing a polarization $l$ on $X$, the Lefschetz intersection pairing $(\,,\,)_{\rm lef}$ defined by
$$ (\alpha,\beta)_{\rm lef}=\int_Xl^{n-2}\alpha\cup \beta$$
gives  the desired intersection form). In the case of a hyper-K\"{a}hler manifold of dimension $2n$, the Beauville-Bogomolov intersection pairing on $H^2(X,\mathbb{Q})$ is independent of the choice of a polarization, but when we restrict it to the $l$-primitive cohomology
$H^2(X,\mathbb{Q})_{\rm prim}=H^2(X,\mathbb{Q})^{\perp l^{2n-1}}$, the two pairings coincide up to a scalar coefficient.
Let  $KS(X)$ be the Kuga-Satake abelian variety (defined up to isogeny) associated to the polarized Hodge structure  $(H^2(X,\mathbb{Q})_{\rm prim}, (\,,\,)_{\rm lef})$.
Using an adequate  polarization on  $KS(X)$,  the injective morphism of Hodge structures
$$  H^2(X,\mathbb{Q})_{\rm prim}\hookrightarrow {\rm End}\, (H^1_{KS+}(H^2(X,\mathbb{Q})_{\rm prim},(\,,\,)))= {\rm End} \,H^1(KS(X),\mathbb{Q})$$
 gives an injective  morphism of Hodge structures
\begin{eqnarray} \label{eqmorohKS}  H^2(X,\mathbb{Q})_{\rm prim}\hookrightarrow  H^1(KS(X),\mathbb{Q})^{\otimes 2} .\end{eqnarray}
whose image is contained in $\bigwedge^2 H^1(KS(X),\mathbb{Q})=H^2(KS(X),\mathbb{Q})$.

A morphism of Hodge structures  $\beta: H^2(X,\mathbb{Q})_{\rm prim}\rightarrow H^2(KS(X),\mathbb{Q})$ as in (\ref{eqmorohKS}) provides a Hodge class  (see \cite{voisintorino})

\begin{eqnarray}\label{eqhodgeclas} \alpha\in H^{4n-2}(X,\mathbb{Q})\otimes  H^2(KS(X),\mathbb{Q})\subset H^{4n}(X\times KS(X) ,\mathbb{Q}).
\end{eqnarray}

The Hodge conjecture thus predicts  that there is a cycle $\Gamma\in {\rm CH}^{2n}(X\times KS(X))_\mathbb{Q}$ such that $[\Gamma]=\alpha$, hence in particular
$$[\Gamma]_*=\beta:  H^2(X,\mathbb{Q})_{\rm prim}\rightarrow H^2(KS(X),\mathbb{Q}).$$
When this holds, we will say that the Kuga-Satake correspondence is algebraic.

In the case where $X$ is  an abelian surface, so $b_{2}(X)_{tr}\leq 5$, or more generally any projective  $K3$ surface with $\rho\geq 17$, the algebraicity of the class $\alpha$ above is proved by Morrison \cite{morrison}. In that case, the Kuga-Satake variety is isogenous to a sum of copies of  the abelian surface itself.

In the next case, where $b_{2,tr}=6$, we already mentioned that the Kuga-Satake variety is isogenous to a sum of copies of a $4$-dimensional abelian variety which is of  Weil type (assuming the maximality of the Mumford-Tate group).  This case appears geometrically with  $K3$ surfaces with Picard number $16$ and the first  family of such $K3$ surfaces for which the Kuga-Satake correspondence was known to be algebraic was found by Paranjape \cite{paranjape}. The Paranjape $K3$ surfaces  are obtained by desingularizing  double covers of $\mathbb{P}^2$ ramified along the union of six lines.

The geometric situation we consider is the same as in \cite{ogrady}, \cite{markman}. $X$ is a projective hyper-K\"{a}hler manifold of generalized Kummer type. In particular, we know by O'Grady theorem (Theorem \ref{theoogrady}) that $J^3(X)$ is isogenous to a component of the Kuga-Satake variety $KS(X)$.
We prove now the following result.
\begin{theo}\label{theoKS}  Let $X$ be a projective hyper-K\"{a}hler manifold of generalized Kummer type. Then the Kuga-Satake correspondence of $X$ is algebraic.
\end{theo}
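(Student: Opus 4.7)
The plan is to construct an explicit algebraic cycle $\Gamma \in \mathrm{CH}^{2n}(X \times KS(X))_{\mathbb{Q}}$ whose cohomological action realizes the Kuga-Satake embedding (\ref{eqmorohKS}). By Theorem \ref{theoogrady}, $KS(X)$ is isogenous to $J^3(X)^{\oplus 2}$ and the graph of the isogeny is an algebraic correspondence, so it suffices to produce $\Gamma \in \mathrm{CH}^{2n}(X \times J^3(X)^{\oplus 2})_{\mathbb{Q}}$ whose $(4n-2,2)$-K\"unneth component realizes the corresponding embedding $H^2(X,\mathbb{Q})_{\mathrm{tr}} \hookrightarrow H^2(J^3(X)^{\oplus 2},\mathbb{Q})$.

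Two ingredients from Markman's paper drive the construction. First, his surjectivity of the Abel-Jacobi map $\Phi_X\colon \mathrm{CH}^2(X)_{\mathrm{alg}} \to J^3(X)$, combined with a Bloch--Srinivas spread-of-cycles argument and averaging over translations of $J^3(X)$, produces an algebraic cycle $\Gamma_0 \in \mathrm{CH}^2(X \times J^3(X))_{\mathbb{Q}}$ whose $H^3(X,\mathbb{Q}) \otimes H^1(J^3(X),\mathbb{Q})$-K\"unneth component realizes, up to a nonzero scalar, the tautological Hodge isomorphism $H^3(X,\mathbb{Q}) \cong H^1(J^3(X),\mathbb{Q})$. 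Second, fixing a polarization $l \in \mathrm{NS}(X)$, the class $l^{2n-4} \in \mathrm{CH}^{2n-4}(X)_{\mathbb{Q}}$ is manifestly algebraic. I then set
$$\Gamma := p_X^*(l^{2n-4}) \cdot p_{12}^*(\Gamma_0) \cdot p_{13}^*(\Gamma_0) \in \mathrm{CH}^{2n}(X \times J^3(X)^{\oplus 2})_{\mathbb{Q}},$$
where $p_X$, $p_{12}$, $p_{13}$ are the obvious projections; the codimension count is $(2n-4)+2+2 = 2n$.

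A direct K\"unneth computation shows that for $\eta \in H^2(X,\mathbb{Q})_{\mathrm{tr}}$ the only contribution to $[\Gamma]_*(\eta) \in H^2(J^3(X)^{\oplus 2},\mathbb{Q})$ relevant on the transcendental part comes from the pieces of bidegree $(1,1)$ in $H^1(J^3(X)) \otimes H^1(J^3(X))$; the antisymmetry $\alpha_i \cup \alpha_j = -\alpha_j \cup \alpha_i$ on $H^3(X)$ lands this contribution in $\bigwedge^2 H^1(J^3(X))$, and it is obtained by transporting through $\Gamma_0$ the Lefschetz-type map $\psi_l\colon \bigwedge^2 H^3(X,\mathbb{Q}) \to H^{4n-2}(X,\mathbb{Q})$, $\alpha \wedge \beta \mapsto l^{2n-4} \cup \alpha \cup \beta$. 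Applying the proof of Proposition \ref{prononzero} with $l^{2n-4}$ in place of $Q_X^{n-2}$ (the nondegeneracy of the Lefschetz pairing $\int_X l^{2n-3} \cup \alpha \cup \beta$ on $H^3(X,\mathbb{Q})$ forces $\psi_l \neq 0$, hence surjective on the transcendental part by simplicity of $H^{4n-2}(X,\mathbb{Q})_{\mathrm{tr}}$ for very general $X$) shows $[\Gamma]_*|_{H^2(X,\mathbb{Q})_{\mathrm{tr}}}$ is a nonzero morphism of Hodge structures. For very general $X$ of generalized Kummer type the Mumford-Tate group of $H^2(X,\mathbb{Q})_{\mathrm{tr}}$ is $SO(q_X)$, so the transcendental cohomology is simple and the Kuga-Satake embedding realizes it as a simple sub-HS of $H^2(J^3(X)^{\oplus 2},\mathbb{Q})$ which is unique up to the natural ambiguity of KS coming from right Clifford multiplication (an algebraic automorphism of $J^3(X)^{\oplus 2}$). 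Consequently $[\Gamma]_*$ agrees with the Kuga-Satake embedding up to a nonzero rational scalar and an algebraic automorphism, and a standard spread argument in the universal family of polarized HK's of generalized Kummer type transports the identification to the given $X$.

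The hardest step will be the production of $\Gamma_0$: Markman's surjectivity of $\Phi_X$ is only a statement about the image of codimension-$2$ cycles in $J^3(X)$, and extracting from it an algebraic cycle on $X \times J^3(X)$ whose cohomology class has precisely the desired $(3,1)$-K\"unneth component (realizing the Hodge isomorphism, and not contaminated by translation-invariant contributions coming from $\mathrm{NS}$-classes) requires a careful spread-of-cycles construction in a universal family, followed by an averaging procedure over translations of $J^3(X)$ to kill the K\"unneth components outside the $(3,1)$-position.
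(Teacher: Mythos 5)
Your construction follows the same architecture as the paper's proof: Markman's surjectivity of the Abel--Jacobi map is converted into a codimension $2$ cycle on $J^3(X)\times X$ inducing the identification $H_1(J^3(X),\mathbb{Q})\cong H^3(X,\mathbb{Q})$, and a product of two copies of that cycle with an algebraic class of codimension $2n-4$ on $X$ realizes algebraically a map $\bigwedge^2H^3(X,\mathbb{Q})\to H^{4n-2}(X,\mathbb{Q})$. But two of your steps have genuine gaps. First, the K\"unneth purification: averaging over translations of $J^3(X)$ does nothing, since translations act trivially on cohomology; the correct device, used in the paper, is pulling back under the multiplication maps $[N]$ on $J^3(X)$, which rescale the K\"unneth components by different powers of $N$. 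Second, and more seriously, your replacement of $Q_X^{n-2}$ by $l^{2n-4}$ is not justified by the argument you give. Hard Lefschetz shows $\psi_l\neq 0$ only by pairing ${\rm Im}\,\psi_l$ against the algebraic class $l$; this gives no information on the component of ${\rm Im}\,\psi_l$ along $H^{4n-2}(X,\mathbb{Q})_{\rm tr}$, and simplicity of the transcendental Hodge structure cannot exclude that this component vanishes identically --- yet a nonzero map $H^2(X,\mathbb{Q})_{\rm tr}\to H^2(KS(X),\mathbb{Q})$ is exactly what you need. To prove that ${}^t\psi_l\neq 0$ on $H^2(X,\mathbb{Q})_{\rm tr}$ one must redo the representation-theoretic projection of Proposition \ref{prononzero} in the polarized setting: contracting $\eta\, l^{2n-4}$ (with $\eta$ transcendental) against the form, and using $q_X(l,\eta)=0$ and $q_X(l,l)>0$, one finds that ${}^t\psi_l$ equals a nonzero multiple of ${}^t\phi$ on $H^2(X,\mathbb{Q})_{\rm tr}$. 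So the step is repairable (and, if carried out, would even spare the appeal to Markman's algebraicity of $Q_X$ in \cite{markmanBBcahar}), but as written the inference is a non sequitur; the paper instead takes $\Gamma=\mathcal{Z}^2\cdot{\rm pr}_X^*\mathcal{Q}_X^{n-2}$, whose action is literally the O'Grady map $\phi$, already known to be surjective by Theorem \ref{theosurj3}.

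The closing identification is also a gap. Producing an algebraic cycle inducing \emph{some} nonzero morphism of Hodge structures $H^2(X,\mathbb{Q})_{\rm tr}\to H^2(KS(X),\mathbb{Q})$ is not yet the algebraicity of the Kuga--Satake correspondence: the space of such Hodge morphisms has dimension greater than one, and your claim that any two embeddings differ by right Clifford multiplication, hence by an algebraic automorphism of $J^3(X)\times J^3(X)$, is precisely what would have to be proved; in this Weil situation one cannot expect the relevant self-correspondences of $KS(X)$ to be algebraic for free (algebraicity of Weil classes is the content of Markman's theorem, not a formality). The paper closes this point by tracking the specific map: in Theorem \ref{theoogrady} the identification of $J^3(X)$ with a Kuga--Satake factor comes from the universality Theorem \ref{theouniv} applied to the embedding $H^2\hookrightarrow H^3\otimes H^3$ dual to $\phi$, so the transpose of (\ref{eqmorohKS}) \emph{is} the O'Grady map and the cycle above induces exactly the Kuga--Satake embedding. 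Since your cycle, once the first step is repaired, induces a nonzero multiple of $\phi$ on the transcendental part, you should conclude in the same way; this also renders the final ``spread to the given $X$'' detour unnecessary, as the whole computation is then valid on the given projective $X$ itself.
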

This theorem should be actually considered as an addendum  to Markman's paper \cite{markman}. The result  will indeed follow  from the following result (Theorem \ref{theomarkman})  of Markman.
As we already mentioned, for $X$ as above, the Hodge structure on $H^3(X,\mathbb{Q})$ is of Hodge level $1$,  that is,   of type $(2,1)+(1,2)$.
The generalized Hodge conjecture thus predicts that the degree $3$ cohomology of $X$ is supported on a (singular) divisor of $X$, and this is equivalent
to the fact that the Griffiths Abel-Jacobi map
\begin{eqnarray}\label{eqphiX} \Phi_X: {\rm CH}^2(X)_{\rm alg}\rightarrow J^3(X)
\end{eqnarray}
is surjective (see \cite{voisintorino}).
\begin{theo} \label{theomarkman} (Markman \cite{markman})  For a projective hyper-K\"{a}hler manifold of Kummer deformation type, the Abel-Jacobi map (\ref{eqphiX}) is surjective.
\end{theo}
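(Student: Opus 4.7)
My plan is to first establish surjectivity of $\Phi_{X_0}$ for a single specific model $X_0 = K_n(A)$ (a Beauville generalized Kummer variety associated to an abelian surface $A$), and then propagate the result to an arbitrary projective deformation $X$ by a spreading-out argument.

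On $X_0 = K_n(A)$, the basic algebraic cycle is the incidence correspondence
$$ I = \{ (a,\xi) \in A \times K_n(A) \,:\, a \in \mathrm{supp}(\xi) \},$$
an irreducible subvariety of codimension $2$ in $A \times K_n(A)$. For each $a\in A$, the $0$-cycle $[a]-[0_A]$ is algebraically equivalent to zero, so $I_*([a]-[0_A])$ lies in $\mathrm{CH}^2(K_n(A))_{\mathrm{alg}}$, and varying $a$ produces a morphism of abelian varieties $\psi_I : A \to J^3(K_n(A))$. Combining $I$ with the Poincar\'e line bundle $\mathcal{P}$ on $A\times\hat A$ (by pullback to $A\times\hat A\times K_n(A)$ and pushforward to $\hat A\times K_n(A)$) yields a second codimension $2$ correspondence and hence a morphism $\psi_{\hat I} : \hat A \to J^3(K_n(A))$. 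Using the Nakajima–Lehn–Sorger description of $H^*(K_n(A),\mathbb{Q})$ in terms of $H^*(A,\mathbb{Q})$, the weight $3$ Hodge structure decomposes as $H^3(K_n(A),\mathbb{Q}) \cong H^1(A,\mathbb{Q}) \oplus H^1(\hat A,\mathbb{Q})$ (of total dimension $8=b_3$), and an explicit computation identifies the two summands with the images of $[I]^*$ and $[\mathcal{P}\cdot I]^*$ on $H^1$. Consequently $\psi_I \oplus \psi_{\hat I} : A \times \hat A \to J^3(K_n(A))$ is an isogeny, proving the theorem for $X_0$.

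For a general projective $X$ of Kummer deformation type, I would choose a smooth projective connected family $\pi : \mathcal{X} \to S$ with $\mathcal{X}_{s_0} = X_0$ and $\mathcal{X}_{s_1} = X$. The cycles $I_*([a]-[0_A])$ and their $\hat A$-analogues spread out over an \'etale neighborhood of $s_0$, giving a relative Abel–Jacobi map $\Psi : \mathcal{A} \to J^3(\mathcal{X}/S)$ over that neighborhood, where $\mathcal{A}$ is the relative version of $A\times\hat A$ transported by parallel transport in the local system $R^1\pi_*\mathbb{Q}$ restricted to the Kummer locus. The image is a sub-abelian-variety local subsystem of $R^3\pi_*\mathbb{Q}$ that equals the full $J^3$ over $s_0$; by Markman's irreducibility of the monodromy representation on $R^3\pi_*\mathbb{Q}$ for the Kummer family, any sub-local-system must either be zero or the whole thing, so $\Psi$ is surjective on the open locus where the cycles remain algebraic, and a specialization argument transfers this to $X$.

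The main obstacle is the last paragraph: algebraic cycles do not deform canonically in families, and the relative intermediate Jacobian is only a holomorphic (not algebraic) family over the global period domain. The technical heart is therefore to verify that the image of $\Psi$ is in fact a flat family of abelian subvarieties, which requires either a Bloch–Srinivas-type decomposition-of-the-diagonal argument applied relatively, or Markman's machinery of modular parallel transport operators. Once the image is shown to be a sub-local-system, irreducibility of the degree $3$ monodromy on the Kummer deformation family (a key input from \cite{markman}) forces it to be everything, and the specialization from the generic fiber to $X$ completes the proof.
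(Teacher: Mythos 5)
First, a point of order: the paper you are commenting on does not prove this statement at all. Theorem \ref{theomarkman} is quoted as a result of Markman, with the proof residing entirely in \cite{markman}; the present paper only uses it (in the proof of Theorem \ref{theoKS}) as a black box, in the equivalent form that there is a cycle $\mathcal{Z}\in {\rm CH}^2(J^3(X)\times X)_{\mathbb{Q}}$ inducing the identity $H_1(J^3(X),\mathbb{Q})\cong H^3(X,\mathbb{Q})$. So there is no in-paper proof to compare yours against, and your attempt has to be judged against what such a proof would actually require.

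On that score, your first paragraph is essentially sound: on $K_n(A)$ itself one does have $H^3(K_n(A),\mathbb{Q})\cong H^3(A,\mathbb{Q})\oplus H^1(A,\mathbb{Q})$ (dimension $4+4=8$), and explicit correspondences built from the incidence variety and the Poincar\'{e} bundle do realize $J^3(K_n(A))$ as isogenous to $\hat A\times A$ via Abel--Jacobi. But the second half of your argument has a genuine gap, and it is exactly the part you flag as ``the technical heart'': the images of your Abel--Jacobi maps do \emph{not} a priori form a sub-local-system of $R^3\pi_*\mathbb{Q}$, because the cycles $I$ and $\mathcal{P}\cdot I$ only exist on the locus of Hilbert-scheme-type (Kummer) models, which is a countable union of proper subvarieties of the moduli space of projective deformations; moreover any connected family joining $K_n(A)$ to a general projective $X$ must pass through non-projective or non-Kummer fibers, on which ``algebraic cycle'' loses meaning and the relative intermediate Jacobian is only a holomorphic family. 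Surjectivity of $\Phi_X$ is neither an open nor a closed condition in any elementary sense, so the concluding ``specialization argument'' does not exist as stated. Closing this gap is precisely the content of Markman's paper: he needs his computation of the monodromy group of generalized Kummer varieties and his machinery for deforming cycle classes (modular parallel transport, deformability of the relevant sheaves/characteristic classes) to produce, on \emph{every} projective deformation $X$, actual algebraic cycles inducing an isogeny onto $J^3(X)$. Invoking ``irreducibility of the degree $3$ monodromy'' only helps once you already know the Abel--Jacobi image is a parallel sub-local-system, which is the statement you have not proved. As it stands, your proposal proves the theorem only for $X=K_n(A)$ and correctly names, but does not supply, the argument for the general case.
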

\begin{proof}[Proof of Theorem \ref{theoKS}] An equivalent version of  Theorem \ref{theomarkman} says that  there exists a codimension cycle $\mathcal{Z}\in {\rm CH}^2(J^3(X)\times X)_\mathbb{Q}$ such that
the map $[\mathcal{Z}]_*:H_1(J^3(X),\mathbb{Q})\rightarrow H^3(X,\mathbb{Q})$ is the natural identification
$H_1(J^3(X),\mathbb{Q})\cong   H^3(X,\mathbb{Q})$. We recall here that $J^3(X) $ is the complex torus
$H^3(X,\mathbb{C})/(F^2H^3(X,\mathbb{C})\oplus  H^3(X,\mathbb{Z}))$ built from the Hodge structure on $H^3(X,\mathbb{Z})$ so that
$H_1(J^3(X),\mathbb{Z})=H^3(X,\mathbb{Z})$ canonically.
Note that we can  assume that the cohomology class $[\mathcal{Z}]\in H^4(J^3(X)\times X, \mathbb{Q})$ belongs to the K\"{u}nneth component
$H^1(J^3(X),\mathbb{Q} )\otimes  H^3(X, \mathbb{Q})$. Indeed, using the action of the  maps of multiplication by $k$  on $J^3(X)$, the K\"{u}nneth components of $[\mathcal{Z}]$ are all algebraic, and the K\"{u}nneth components not in  $H^1(J^3(X),\mathbb{Q} )\otimes  H^3(X, \mathbb{Q})$
induce the zero map $H_1(J^3(X),\mathbb{Q})\rightarrow H^3(X,\mathbb{Q})$.

Next, by another result of Markman \cite{markmanBBcahar}, the class $Q_X\in H^4(X,\mathbb{Q})$ introduced in (\ref{eqQX}) is algebraic on hyper-K\"{a}hler manifolds of generalized Kummer type. It is thus the class of a cycle $\mathcal{Q}_X\in{\rm CH}^2(X)_\mathbb{Q}$.
On $J^3(X)\times X$, we consider the following cycle
\begin{eqnarray} \Gamma:=\mathcal{Z}^2\cdot {\rm pr}_X^*\mathcal{Q}_X^{2n-2},
\end{eqnarray}
where ${\rm pr}_X: J^3(X)\times X\rightarrow X$ denotes the second projection.
We prove the following
\begin{claim}\label{claim} The map $[\Gamma]_*: H_2(J^3(X),\mathbb{Q})\rightarrow H^{4n-2}(X,\mathbb{Q})$
identifies with the O'Grady map
$\phi: \bigwedge^3H^3(X,\mathbb{Q})\rightarrow H^6(X,\mathbb{Q})\stackrel{Q_X}{\rightarrow} H^{4n-2}(X,\mathbb{Q})$ of (\ref{eqogradymap}).
\end{claim}
\begin{proof} Recall that we assumed that $[\mathcal{Z}]\in  H^1(J^3(X),\mathbb{Q} )\otimes  H^3(X, \mathbb{Q})$. Taking a basis $e_i$ of
$H^3(X, \mathbb{Q})$, which provides a basis $f_i$ of $H_1(J^3(X),\mathbb{Q})$ and the dual basis
$f_i^*$ of $H_1(J^3(X),\mathbb{Q})$, we can thus write
\begin{eqnarray} \label{eqecriZ} [\mathcal{Z}]=\sum_i {\rm pr}_{J^3(X)}^*f_i^*\cup {\rm pr}_X^* e_i,
\end{eqnarray}
since $[\mathcal{Z}]_*(f_i)=e_i$.
We now deduce from (\ref{eqecriZ})
$$[\Gamma]=\sum_{i,j}{\rm pr}_{J^3(X)}^*f_i^*\cup f_j^*\cup {\rm pr}_X^*e_i\cup   {\rm pr}_X^*e_j\cup   {\rm pr}_X^*Q_X,$$
which immediately implies the claim. \end{proof}
The claim implies the theorem  since we already identified the intermediate Jacobian with a component of the Kuga-Satake variety, in such a way that the transpose of the map (\ref{eqmorohKS}) is the O'Grady map. Thus the map  (\ref{eqmorohKS}) or its transpose  is induced by an algebraic cycle.
\end{proof}

CNRS, Institut de Math\'{e}matiques de Jussieu-Paris rive gauche

claire.voisin@imj-prg.fr
    \end{document}